\tikzset{every loop/.style={min distance=10mm,looseness=10}}
\theoremstyle{plain}
\newtheorem{theorem}{Theorem}[section]
\newtheorem{proposition}[theorem]{Proposition}
\newtheorem{corollary}[theorem]{Corollary}
\newtheorem{lemma}[theorem]{Lemma}
\def\th@remark{%
  \thm@headfont{\bfseries}%
  \normalfont 
  \thm@preskip\topsep \divide\thm@preskip\tw@
  \thm@postskip\thm@preskip
}
\theoremstyle{remark}
\theoremstyle{definition}
\newtheorem{definition}[theorem]{Definition}
\begin{document} 
\title[On abelian $\ell$-towers of multigraphs]{On abelian $\ell$-towers of multigraphs}

\author{Daniel Valli\`{e}res}

\address{Mathematics and Statistics Department, California State University, Chico, CA 95929 USA}
\email{dvallieres@csuchico.edu}

\subjclass[2010]{Primary: 05C50} 
\date{\today}

\begin{abstract}
We study how the $\ell$-adic valuation of the number of spanning trees varies in regular abelian $\ell$-towers of multigraphs.  We show that for an infinite family of regular abelian $\ell$-towers of bouquets, the behavior of the $\ell$-adic valuation of the number of spanning trees behave similarly to the $\ell$-adic valuation of the class numbers in $\mathbb{Z}_{\ell}$-extensions of number fields.
\end{abstract} 
\maketitle 
\tableofcontents 

\section{Introduction}
There is a host of conjectures in the field of special values of $L$-functions in algebraic number theory, and it might be interesting to see if there are analogous conjectures and theorems for multigraphs.  In \cite{Hammer:2020}, the authors studied a few such results regarding the special value $u=1$ of Artin-Ihara $L$-functions associated to abelian covers of multigraphs.  They showed a result analogous to the classical Brumer conjecture, and they also computed the index of an ideal analogous to the Stickelberger ideal in algebraic number theory.

Let $K$ be an algebraic function field over a finite field $\mathbb{F}_{q}$, where $q$ is a power of a prime, and fix an arbitrary rational prime $\ell$.  For $n \ge 0$, let $K_{n} = K\mathbb{F}_{\ell^{n}}$, where $\mathbb{F}_{\ell^{n}}$ is the unique subfield of $\overline{\mathbb{F}}_{q}$ such that $[\mathbb{F}_{\ell^{n}}:\mathbb{F}_{q}] =  \ell^{n}$.  Then, we have a tower of fields
$$K = K_{0} \subseteq K_{1} \subseteq K_{2} \subseteq \ldots \subseteq K_{n} \subseteq \ldots $$
Let $h_{n}$ be the cardinality of ${\rm Pic}^{\circ}(K_{n})$ for $n \ge 1$.
\begin{theorem} \label{function_field}
There exist constants $\lambda_{\ell}$ and $\nu_{\ell}$, and an integer $n_{0} \ge 1$ such that 
$${\rm ord}_{\ell}(h_{n}) = \lambda_{\ell} \cdot n + \nu_{\ell}, $$
for all $n \ge n_{0}$.
\end{theorem}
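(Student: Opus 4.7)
The plan is to reduce the theorem, via the Weil zeta function of the curve attached to $K$, to an elementary local analysis of $\ell$-adic valuations. Let $X$ be the smooth projective curve over $\mathbb{F}_q$ with function field $K$, and let the numerator of its zeta function factor as $\prod_{i=1}^{2g}(1-\alpha_i T)$, where $g$ is the genus of $X$ and the $\alpha_i$ are Weil numbers. Passing from $K$ to the constant field extension $K_n$ replaces Frobenius by its $\ell^n$-th power, so a standard computation gives
\[
h_n = \prod_{i=1}^{2g}\bigl(1 - \alpha_i^{\ell^n}\bigr),
\]
and the problem becomes one about the asymptotic $\ell$-adic valuation of each factor.

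Fix an embedding $\overline{\mathbb{Q}} \hookrightarrow \overline{\mathbb{Q}}_\ell$ and let $v_\ell$ be the unique extension of the $\ell$-adic valuation, normalized so that $v_\ell(\ell)=1$. I would handle each factor by the following trichotomy. First, if $v_\ell(\alpha_i)>0$, then $v_\ell(\alpha_i^{\ell^n})\to\infty$, so $v_\ell(1-\alpha_i^{\ell^n})=0$ for $n$ large. Second, if $\alpha_i$ is an $\ell$-adic unit whose residue $\bar\alpha_i$ is not equal to $1$, then because $x\mapsto x^{\ell^n}$ is an automorphism of the finite residue field we have $\bar\alpha_i^{\ell^n}\neq 1$ for every $n$, so again $v_\ell(1-\alpha_i^{\ell^n})=0$. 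Third, if $\alpha_i \equiv 1 \pmod{\mathfrak{m}}$, write $\alpha_i = 1 + \eta_i$ with $v_\ell(\eta_i)>0$, expand $(1+\eta_i)^\ell - 1$ via the binomial theorem, and observe that once $v_\ell(\eta_i) > 1/(\ell-1)$ the term $\ell\eta_i$ dominates strictly. Hence in the stable zone the map $\alpha \mapsto \alpha^\ell$ increases $v_\ell(\alpha-1)$ by exactly $1$, and iterating yields $v_\ell(1-\alpha_i^{\ell^n}) = n + c_i$ for all $n$ large enough, where $c_i$ accounts for the bounded number of steps required to reach the stable zone.

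Summing the three types of contributions over $i$ gives
\[
v_\ell(h_n) = \lambda_\ell \cdot n + \nu_\ell \qquad \text{for all } n \ge n_0,
\]
with $\lambda_\ell$ equal to the number of $\alpha_i$ congruent to $1$ modulo the maximal ideal under the chosen embedding. The main technical point is the third case: one must verify that the condition $v_\ell(\alpha-1) > 1/(\ell-1)$ is preserved under $\alpha \mapsto \alpha^\ell$, so that the dominant-term estimate in the binomial expansion remains valid through every subsequent iteration and the growth is exactly linear rather than merely bounded below linearly. Everything else amounts to organizing this local analysis and adding up the pieces.
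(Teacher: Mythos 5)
The paper does not actually prove this statement; it simply cites \S 11 of Rosen's \emph{Number Theory in Function Fields}, and your argument is essentially the standard one given there: reduce to $h_n=\prod_{i=1}^{2g}(1-\alpha_i^{\ell^n})$ via the zeta function of the constant field extension and then do the local analysis factor by factor. Your trichotomy is correct (the $\alpha_i$ are algebraic integers, so the three cases are exhaustive), and the stable-zone computation for $v_\ell(\eta)>1/(\ell-1)$ is right. The one step you assert without justification is that each factor of the third type reaches the stable zone after a \emph{bounded} number of iterations: you should note that all the iterates $\alpha_i^{\ell^k}$ lie in the fixed finite extension $\mathbb{Q}_\ell(\alpha_i)$, so the valuations $v_\ell(\alpha_i^{\ell^k}-1)$ live in the discrete group $\tfrac{1}{e_i}\mathbb{Z}$ and, since $\alpha_i^{\ell-1}+\cdots+1\equiv \ell\equiv 0$ modulo the maximal ideal, they strictly increase by at least $1/e_i$ at each step until they exceed $1/(\ell-1)$; one should also remark that these valuations are finite because a Weil number of absolute value $\sqrt{q}>1$ is never a root of unity. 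With that observation supplied, the proof is complete and matches the cited source.
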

\begin{proof}
See \S $11$ of \cite{Rosen:2002} for instance.
\end{proof}
In \cite{Iwasawa:1973}, Iwasawa proved an analogous result for $\mathbb{Z}_{\ell}$-extensions of number fields.  Let $K$ be a number field, and suppose we have a sequence of fields
$$K = K_{0} \subseteq K_{1} \subseteq K_{2} \subseteq \ldots \subseteq K_{n} \subseteq \ldots $$
forming a $\mathbb{Z}_{\ell}$-extension of $K$.  Now, we let $h_{n}$ be the cardinality of ${\rm Cl}(K_{n})$ for $n \ge 1$.  Then Iwasawa showed the following theorem.
\begin{theorem}[Iwasawa] \label{iwasawa}
There exist constants $\mu_{\ell}$, $\lambda_{\ell}$, $\nu_{\ell}$, and an integer $n_{0} \ge 1$ such that
$${\rm ord}_{\ell}(h_{n}) = \mu_{\ell} \cdot \ell^{n} + \lambda_{\ell} \cdot n + \nu_{\ell}, $$
for all $n \ge n_{0}$.
\end{theorem}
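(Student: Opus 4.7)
The plan is to follow Iwasawa's original strategy via the structure theory of modules over the Iwasawa algebra. First I would set $K_{\infty} = \bigcup_{n} K_{n}$ and $\Gamma = \mathrm{Gal}(K_{\infty}/K) \cong \mathbb{Z}_{\ell}$ with a fixed topological generator $\gamma$. By class field theory, the $\ell$-Sylow subgroup $A_{n}$ of $\mathrm{Cl}(K_{n})$ is identified with $\mathrm{Gal}(L_{n}/K_{n})$, where $L_{n}$ is the maximal unramified abelian $\ell$-extension of $K_{n}$. Setting $L_{\infty} = \bigcup_{n} L_{n}$, the Galois group $X = \mathrm{Gal}(L_{\infty}/K_{\infty})$ carries a natural continuous $\Gamma$-action by conjugation, which extends to a module structure over the Iwasawa algebra $\Lambda = \mathbb{Z}_{\ell}[[\Gamma]]$; identifying $\Lambda \cong \mathbb{Z}_{\ell}[[T]]$ via $\gamma \mapsto 1+T$ turns $X$ into a $\Lambda$-module.

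Next I would establish that $X$ is a finitely generated torsion $\Lambda$-module. Finite generation follows from a Nakayama-style argument, using that $X/TX$ is controlled by $A_{0}$ together with contributions from the finitely many primes of $K$ ramified in $K_{\infty}/K$; torsion-ness is the substantive classical input, and uses that each such inertia subgroup has finite index in $\Gamma$. With this in hand, I would invoke the structure theorem for finitely generated torsion $\Lambda$-modules to obtain a pseudo-isomorphism
$$X \longrightarrow \bigoplus_{i=1}^{s} \Lambda/(\ell^{m_{i}}) \,\oplus\, \bigoplus_{j=1}^{t} \Lambda/(f_{j}(T)^{n_{j}}),$$
with the $f_{j}$ distinguished irreducible polynomials, and define $\mu_{\ell} = \sum_{i} m_{i}$ and $\lambda_{\ell} = \sum_{j} n_{j} \deg f_{j}$.

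The computational heart is then to evaluate $\mathrm{ord}_{\ell}(|X/\omega_{n} X|)$ for $\omega_{n} = (1+T)^{\ell^{n}} - 1$. Each cyclic summand of the structure theorem can be analyzed on the nose: the summands $\Lambda/(\ell^{m_{i}})$ yield a total contribution $\mu_{\ell} \cdot \ell^{n}$, while each $\Lambda/(f_{j}^{n_{j}})$ contributes $n_{j} \deg f_{j} \cdot n$ up to bounded error, via Weierstrass preparation applied to $\omega_{n}/\omega_{n-1}$. Finally I would identify $A_{n}$ with $X/(\omega_{n} X + Y_{n})$, where $Y_{n}$ is the submodule generated by the images of inertia subgroups of the primes ramified in $K_{\infty}/K$, and show that the sequence $(Y_{n})$ stabilizes for $n$ large. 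Assembling these pieces yields the formula $\mathrm{ord}_{\ell}(h_{n}) = \mu_{\ell} \ell^{n} + \lambda_{\ell} n + \nu_{\ell}$ for all $n \geq n_{0}$.

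The main obstacle I expect is the careful bookkeeping of bounded errors. Pseudo-isomorphisms only preserve $\ell$-adic orders up to a global bounded constant; the ramification submodule $Y_{n}$ introduces a further bounded correction; and the cokernel passage from $X/\omega_{n} X$ to the actual Galois group $\mathrm{Gal}(L_{n}/K_{n})$ contributes yet another. The theorem succeeds precisely because all of these errors are eventually independent of $n$ and collapse into the single constant $\nu_{\ell}$. Rigorously controlling the ramified-prime contribution—showing that the decomposition groups act on $X$ in a way that makes $|Y_{n}|$ stabilize rather than grow—is the delicate step, and is where the specific Galois-theoretic geometry of $\mathbb{Z}_{\ell}$-extensions (as opposed to mere $\Lambda$-module theory) really enters the argument.
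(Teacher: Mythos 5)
The paper does not prove Theorem \ref{iwasawa} at all: it is quoted as a classical result and attributed to \cite{Iwasawa:1973}, so there is no internal argument to compare yours against. Your outline is the standard $\Lambda$-module proof (Iwasawa's structure-theoretic argument, as in Washington, Ch.~13, or Serre's Bourbaki expos\'e), and as a sketch it is essentially correct: the identification of $X=\mathrm{Gal}(L_\infty/K_\infty)$ as a finitely generated torsion $\Lambda$-module, the structure theorem, the computation of $\mathrm{ord}_\ell$ of the elementary quotients, and the control-theorem step are exactly the right ingredients, and you correctly locate the delicate point in the ramification bookkeeping.

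Two imprecisions are worth flagging. First, the torsion-ness of $X$ does not come from the inertia subgroups having finite index in $\Gamma$ per se; it comes from the finiteness of every $A_n$ fed back through the control theorem (if $X$ had positive $\Lambda$-rank, the quotients $Y_e/\nu_{n,e}Y_e$ could not all be finite). The finite-index statement about inertia is instead what lets you pass to a level $K_e$ above which all ramified primes are totally ramified, which is a prerequisite for the control theorem, not for torsion-ness. Second, the control theorem in its usable form reads $A_n \cong X/\nu_{n,e}Y_e$ for $n \ge e$, where $\nu_{n,e} = \omega_n/\omega_e$ and $Y_e \supseteq \omega_e X$ is the fixed submodule built from the inertia generators at level $e$; the submodule you call $Y_n$ is $\nu_{n,e}Y_e$ and does \emph{not} stabilize. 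What one actually does is compute $|Y_e/\nu_{n,e}Y_e|$ directly from the structure theorem applied to $Y_e$ (which is pseudo-isomorphic to $X$ since $X/Y_e \cong A_e$ is finite) and absorb the constant factor $|X/Y_e|$ into $\nu_\ell$. Asserting a priori that the ramified contribution is ``a bounded correction'' to $|X/\omega_n X|$ and then computing the latter risks circularity; computing through $Y_e$ avoids it. With those adjustments your plan is the standard complete proof.
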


In this paper, we investigate the following question:  Is there a result analogous to Theorem \ref{function_field} or Theorem \ref{iwasawa} for (some) abelian $\ell$-towers of multigraphs, where the cardinality of ${\rm Pic}^{\circ}(K_{n})$ in the function field case or the cardinality of ${\rm Cl}(K_{n})$ in the number field case is replaced with the number of spanning trees in the multigraph case?

The paper is organized as follows.  In \S \ref{pre}, we gather together some well-known results about algebraic number theory and graph theory that will be used throughout the paper.  In \S \ref{number}, using Artin-Ihara $L$-functions, we derive a useful formula for the number of spanning trees of the top multigraph of an abelian cover of multigraphs.  Then, we move on in \S \ref{upper_bound_1} to find an upper bound for the number of spanning trees at each layer of a regular abelian $\ell$-tower of multigraphs.  In \S \ref{over_bouquet}, we specialize to the case where the bottom multigraph is a bouquet.  This simplifies the matters quite a bit, since among other things the number of spanning trees of a bouquet is one.  We show in \S \ref{lower_bound_1} a lower bound for the number of spanning trees at each layer of a regular abelian $\ell$-tower of a bouquet.  The Galois covers of bouquets are precisely the Cayley-Serre multigraphs, and we remind the reader about them in \S \ref{cayley_serre}.  It is well-known that the eigenvalues of the adjacency matrix of a Cayley-Serre multigraph can be found explicitly, and we explain this in \S \ref{spectra_cs} in a way that is useful for us.    In \S \ref{cyclo_integer}, we obtain a few simple results about some algebraic integers in cyclotomic fields of prime power conductors.  We also define some polynomials recursively which are used to prove our main result in \S \ref{main_1}.  We end the paper with a few examples in \S \ref{examples}.

\subsection*{Acknowledgement}
I would like to thank Thomas Mattman, Kevin McGown, and Jonathan Sands for various stimulating discussions.

\section{Preliminaries} \label{pre}
\subsection{Algebraic number theory}
All the facts in this section can be found in \cite{Neukirch:1999} for instance.  Recall that a discrete valuation $v$ on a field $k$ is a function
$$v:k \longrightarrow \mathbb{Z} \cup \{ \infty\} $$
satisfying 
\begin{enumerate}
\item $v(x) = \infty$ if and only if $x = 0$,
\item $v(x \cdot y) = v(x) + v(y)$ for all $x,y \in k$,
\item $v(x + y) \ge {\rm min}(v(x),v(y))$ for all $x,y \in k$.
\end{enumerate}
Furthermore, if $v(x) \neq v(y)$, then
\begin{equation} \label{equality}
v(x + y) = {\rm min}(v(x),v(y)).
\end{equation}
If $k$ is a number field, then any prime ideal $\mathfrak{p}$ of its ring of integers $O_{k}$ gives rise to a discrete valuation ${\rm ord}_{\mathfrak{p}}$ as follows.  If $\lambda \in k^{\times}$, and $(\lambda) = \mathfrak{p}^{a} \cdot \mathfrak{a}$, where $(\mathfrak{p},\mathfrak{a}) = 1$, then 
$${\rm ord}_{\mathfrak{p}}(\lambda)=a. $$

If $K/k$ is an extension of number fields and $\mathfrak{P}$ is a prime ideal of $O_{K}$ lying above the prime ideal $\mathfrak{p}$ of $O_{k}$ with ramification index $e = e(\mathfrak{P};\mathfrak{p})$, then
\begin{equation} \label{ramif}
e \cdot {\rm ord}_{\mathfrak{p}}(x) = {\rm ord}_{\mathfrak{P}}(x), 
\end{equation}
for all $x \in k$. 

Throughout this paper, for an integer $m \ge 1$, we let
$$\zeta_{m} = \exp\left( \frac{2 \pi i}{m}\right).$$
The cyclotomic number field $\mathbb{Q}(\zeta_{m})$ is a $CM$-field if $m \neq 1,2$, and we denote its maximal real subfield by $\mathbb{Q}(\zeta_{m})^{+}$ (also when $m = 1,2$, in which case $\mathbb{Q}(\zeta_{m})^{+} = \mathbb{Q}$).  In particular, if $\ell$ is a rational prime number and $i$ a positive integer, then there is a unique prime $\mathcal{L}_{i}$ lying above $\ell$ in $\mathbb{Q}(\zeta_{\ell^{i}})$ which is totally ramified with ramification index $e_{i} = \varphi(\ell^{i})$.  (The function $\varphi$ is the Euler phi function.)  Furthermore,
\begin{equation} \label{unique_prime}
\mathcal{L}_{i} = (1 - \zeta_{\ell^{i}}).
\end{equation}
The unique prime ideal of $\mathbb{Q}(\zeta_{\ell^{i}})^{+}$ lying between $\ell$ and $\mathcal{L}_{i}$ will be denoted by $\mathcal{L}_{i}^{+}$.

If $G$ is a finite abelian group, then we let $\widehat{G}$ denote the characters of $G$.  Given $\chi \in \widehat{G}$, we define the cyclotomic number field
$$\mathbb{Q}(\chi) = \mathbb{Q}(\{\chi(\sigma) \, | \, \sigma \in G \}), $$
and we denote its ring of integers by $\mathbb{Z}[\chi]$.  The ring of integers of $\mathbb{Q}(\chi)^{+}$ will be denoted by $\mathbb{Z}[\chi]^{+}$.

\subsection{Graph theory}
Our main references for this section are \cite{Corry/Perkinson:2018}, \cite{Sunada:2013}, and \cite{Terras:2011}.  We think of multigraphs as presented in \cite{Sunada:2013}.  Thus, a multigraph $X$ consists of a set of vertices $V_{X}$ and a set of directed edges $E_{X}^{+}$ with two functions
$$i:E_{X}^{+} \longrightarrow V_{X} \times V_{X} \text{ and } \iota:E_{X}^{+} \longrightarrow E_{X}^{+}$$
satisfying:
\begin{enumerate}
\item $\iota^{2} = {\rm id}_{E_{X}^{+}}$,
\item $\iota(e) \neq e$ for all $e \in E_{X}^{+}$,
\item $i(\iota(e)) = \tau(i(e))$ for all $e \in E_{X}^{+}$,
\end{enumerate}
where $\tau:V_{X} \times V_{X} \longrightarrow V_{X} \times V_{X}$ is defined via $\tau(x,y) = (y,x)$.  The function $i$ is called the incidence map, and the function $\iota$, the inversion map.  Writing
$$i(e) = (o(e),t(e)), $$
we obtain two functions $o,t:E_{X}^{+} \longrightarrow V_{X}$, called the origin and the terminus maps.  If $v \in V_{X}$, we let
$$E_{v} = \{e \in E_{X}^{+} \, | \, o(e) =v \}. $$
The quotient $E_{X} = E_{X}^{+}/\langle \iota \rangle$ consists of the set of undirected edges.  Note that $2|E_{X}| = |E_{X}^{+}|$.  A multigraph is said to be finite if both $V_{X}$ and $E_{X}$ are finite set.  The edges $e$ with $o(e) = t(e)$ are called loops, and the edges $e$ with $o(e) \neq t(e)$ will be called simple edges.    If $v \in V_{X}$, then the valency of $v$ is defined to be
$${\rm val}_{X}(v) = |E_{v}|,$$
provided $|E_{v}|$ is finite.

As usual, we define the Betti numbers of a multigraph $X$ to be
$$b_{0}(X) = {\rm rank}_{\mathbb{Z}} \, H_{0}(X,\mathbb{Z}) \text{ and } b_{1}(X) = {\rm rank}_{\mathbb{Z}} \, H_{1}(X,\mathbb{Z}).$$
The Euler characteristic of $X$ is then
$$\chi(X) = b_{0}(X) - b_{1}(X). $$
The $0$-th Betti number $b_{0}(X)$ is the number of connected components of $X$ and $b_{1}(X) = |E_{X}| - |V_{X}| + 1$ if $X$ is finite.  \emph{Throughout this paper, by a multigraph, we will always mean a finite connected multigraph without any degree one vertex, unless otherwise stated}  Thus, we will always have
$$\chi(X) = |V_{X}| - |E_{X}|.$$
We also let $g_{X} = |V_{X}|$ and we might drop the index $X$ if the graph is clear from the context.

The divisor group on $X$ is defined to be the free abelian group on the vertices $V_{X}$.  It is an abelian group denoted by ${\rm Div}(X)$.  If $D = \sum_{v \in V_{X}}n_{v} \cdot v \in {\rm Div}(X)$, then we define
$${\rm deg}(D) = \sum_{v \in V_{X}} n_{v}.$$
This gives a group morphism ${\rm deg}:{\rm Div}(X) \longrightarrow \mathbb{Z}$ whose kernel will be denoted by ${\rm Div}^{\circ}(X)$.  We let $\mathcal{M}(X) = \mathbb{Z}^{V_{X}}$ be the set of $\mathbb{Z}$-valued functions on $V_{X}$.  For $v \in V_{X}$, we define $\chi_{v} \in \mathcal{M}(X)$ via
\begin{equation*}
\chi_{v}(v_{0}) = 
\begin{cases}
1, &\text{ if } v = v_{0};\\
0, &\text{ if } v \neq v_{0}.
\end{cases}
\end{equation*}
The functions $\chi_{v}$, as $v$ runs over $V_{X}$, form a $\mathbb{Z}$-basis for $\mathcal{M}(X)$.  One then defines a group morphism ${\rm div}:\mathcal{M}(X) \longrightarrow {\rm Div}(X)$ on the basis elements $\chi_{v}$ (and extending by $\mathbb{Z}$-linearity) via 
$$\chi_{v_{0}} \mapsto {\rm div}(\chi_{v_{0}}) = \sum_{v \in V_{X}} \rho_{v}(v_{0}) \cdot v, $$
where
\begin{equation*}
\rho_{v}(v_{0}) = 
\begin{cases}
 {\rm val}_{X}(v_{0}) - 2\cdot \text{number of undirected loops at $v_{0}$}, &\text{ if } v = v_{0}; \\
-\text{number of undirected edges from $v$ to $v_{0}$}, &\text{ if } v \neq v_{0}.
\end{cases}
\end{equation*}
We let ${\rm Pr}(X) = {\rm div}(\mathcal{M}(X))$ which is a subgroup of ${\rm Div}(X)$ and furthermore, we let ${\rm Pic}(X) = {\rm Div}(X)/{\rm Pr}(X)$. Note that ${\rm Pr}(X) \subseteq {\rm Div}^{\circ}(X)$, and thus one also defines ${\rm Pic}^{\circ}(X) = {\rm Div}^{\circ}(X)/{\rm Pr}(X)$.  The group ${\rm Pic}^{\circ}(X)$ is always finite and we let $\kappa_{X} = |{\rm Pic}^{\circ}(X)|$.  It is well known that $\kappa_{X}$ is the number of spanning trees of $X$.  (Note that if $g_{X} = 1$, then $\kappa_{X} = 1$ and the unique spanning tree is the subgraph consisting of the unique vertex.)  For more details, we refer the reader to \cite{Corry/Perkinson:2018}.

We remind the reader about a few matrices attached to multigraphs.  From now on, we introduce a labeling of the vertices $V_{X} = \{v_{1},\ldots,v_{g_{X}} \}$.
\begin{definition}
Let $X$ be a multigraph.
\begin{enumerate}
\item The adjacency matrix $A$ attached to $X$ is the $g_{X} \times g_{X}$ matrix $A=(a_{ij})$ defined via
\begin{equation*}
a_{ij} = 
\begin{cases}
\text{Twice the number of undirected loops at the vertex }i, &\text{ if } i=j;\\
\text{The number of undirected edges connecting the $i$th vertex to the $j$th vertex}, &\text{ if } i \neq j.
\end{cases}
\end{equation*}
\item The valency matrix $D$ attached to $X$ is the $g_{X} \times g_{X}$ diagonal matrix $D = (d_{ij})$ defined via $d_{ii} = {\rm val}_{X}(v_{i})$.
\item The matrix $D - A$ is called the Laplacian matrix attached to $X$ and is denoted by $Q$.
\end{enumerate}
\end{definition}

The Ihara zeta function of a multigraph $X$ is defined to be
$$\zeta_{X}(u) = \prod_{\mathfrak{c}}(1-u^{l(\mathfrak{c})})^{-1}, $$
where the product is over all primes $\mathfrak{c}$ of $X$, and $l(\mathfrak{c})$ denotes the length of the prime $\mathfrak{c}$.  This product is usually infinite and converges if $|u|$ is small enough.  It is the reciprocal of a polynomial in $u$, and more precisely we have:
\begin{theorem}[Three-term determinant formula] \label{three_term}
Let $X$ be a multigraph, $A$ the adjacency matrix and $D$ the valency matrix of $X$.  Then, we have
$$\frac{1}{\zeta_{X}(u)} = (1-u^{2})^{-\chi(X)} \cdot {\rm det}(I - Au + (D - I)u^{2}). $$
\end{theorem}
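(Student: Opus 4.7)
The plan is to follow the edge-matrix approach of Hashimoto and Bass, in two stages.

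First I would introduce the $2|E_X| \times 2|E_X|$ Hashimoto edge-adjacency matrix $W$, indexed by the directed edges $E_X^{+}$, with $W_{e,f} = 1$ when $t(e) = o(f)$ and $f \neq \iota(e)$, and $W_{e,f} = 0$ otherwise. A standard combinatorial argument identifies the entries of $W^m$ with the counts of backtrackless paths of length $m$ between directed edges, so $\mathrm{tr}(W^m)$ is the number of closed backtrackless tailless paths of length $m$ in $X$, with each prime $\mathfrak{c}$ contributing $l(\mathfrak{c})$ rotations per period. Combining this with the power-series identity
$$-\log\det(I - uW) = \sum_{m \geq 1} \frac{u^m}{m}\,\mathrm{tr}(W^m)$$
and the logarithmic derivative of the Euler product defining $\zeta_X$ yields
$$\zeta_X(u)^{-1} = \det(I_{2|E_X|} - uW).$$

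Second, I would apply Bass's matrix identity to collapse this $2|E_X|$-dimensional determinant to an $|V_X|$-dimensional one. Define the incidence matrices $S, T \in M_{|V_X| \times 2|E_X|}(\mathbb{Z})$ by $S_{v,e} = \delta_{v,o(e)}$ and $T_{v,e} = \delta_{v,t(e)}$, together with the edge involution $J \in M_{2|E_X|}(\mathbb{Z})$ given by $J_{e,f} = \delta_{f,\iota(e)}$. Direct computation yields the relations $SS^{t} = D$, $ST^{t} = A$, $SJ = T$, $J^{2} = I$, and $T^{t}S = W + J$. An appropriate $(|V_X| + 2|E_X|)$-dimensional block matrix whose determinant may be computed two ways via Schur complements (row-reducing through $I_{|V_X|} - Au + (D-I)u^{2}$ on one side and through $I_{2|E_X|} - uW$ on the other) then gives Bass's identity
$$\det(I_{2|E_X|} - uW) = (1-u^{2})^{|E_X| - |V_X|}\det\bigl(I_{|V_X|} - Au + (D-I)u^{2}\bigr).$$
Since $\chi(X) = |V_X| - |E_X|$, so that $|E_X| - |V_X| = -\chi(X)$, combining with the first stage produces the desired formula.

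The main obstacle is the bookkeeping in the second stage: the algebraic relations among $S, T, J, A, D, W$ must line up so that the Schur-complement calculation produces precisely the three-term polynomial and the correct power of $(1-u^{2})$. Particular care is required on the diagonal of $A$, where each undirected loop contributes $2$ by the paper's convention, which is exactly what makes $ST^{t} = A$ hold as stated; an analogous careful tracking of loops and tails is also needed in the first stage when identifying rotations of primes with the traces of powers of $W$.
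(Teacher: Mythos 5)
Your argument is correct and is precisely the standard Hashimoto--Bass proof that the paper invokes by citing Theorem 2.5 of Terras's book: the two-term formula $\zeta_{X}(u)^{-1}=\det(I-uW)$ via traces of powers of the edge-adjacency matrix, followed by Bass's block-matrix/Schur-complement identity collapsing to the vertex-indexed three-term determinant, exactly as you outline. Nothing further is needed.
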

\begin{proof}
See Theorem $2.5$ of \cite{Terras:2011}.  
\end{proof}
From now on, we let
$$Z_{X}(u) = \frac{1}{\zeta_{X}(u)} \in \mathbb{Z}[u]$$
and
$$h_{X}(u) =  {\rm det}(I - Au + (D - I)u^{2}) \in \mathbb{Z}[u].$$
Note that ${\rm deg}(Z_{X}(u)) = 2 |E_{X}|$, whereas ${\rm deg}(h_{X}(u))  =2 g_{X}$.  

In this paper, we will be solely interested in abelian covers $Y/X$ of multigraphs, that is Galois covers with abelian group of deck transformations.  We will use the notation ${\rm Gal}(Y/X)$ to denote the group of deck transformations and we shall refer to ${\rm Gal}(Y/X)$ as the Galois group of $Y/X$.  If $X^{ab}$ is a maximal abelian cover of $X$ ($X^{ab}$ is not a finite multigraph), then we have an isomorphism
\begin{equation} \label{rec_map}
\rho:H_{1}(X,\mathbb{Z}) \longrightarrow {\rm Gal}(X^{ab}/X). 
\end{equation}
So, abelian covers of a multigraph $X$ correspond to subgroups of finite index in $H_{1}(X,\mathbb{Z})$.  More on this can be found in \cite{Sunada:2013}.

Let $Y/X$ be an abelian cover of multigraphs, and set $G = {\rm Gal}(Y/X)$.  The cardinality of $G$ will be referred to as the degree of the cover.  If $\chi \in \widehat{G}$, then the Artin-Ihara $L$-function is defined by the formal infinite product
$$L_{X}(u,\chi) = \prod_{\mathfrak{c}} \left(1 - \chi\left(\left(\frac{Y/X}{\mathfrak{c}}\right) \right)u^{l(\mathfrak{c})} \right)^{-1}, $$
where the product is over all primes $\mathfrak{c}$ in $X$, $l(\mathfrak{c})$ denotes the length of $\mathfrak{c}$, and $\left(\frac{Y/X}{\mathfrak{c}}\right)$ the Frobenius automorphism at $\mathfrak{c}$.  For more details, see \cite{Terras:2011}.  As for the Ihara zeta function, this product is usually infinite and it can be shown to converge when $|u|$ is small enough.  Throughout this paper, we let $\chi_{0}$ be the trivial character of the group $G$.  Note that $L_{X}(u,\chi_{0}) = \zeta_{X}(u)$.
These $L$-functions turn out to be reciprocals of polynomials as Theorem \ref{three_term_L} below shows.  From now on, we fix a vertex $w_{i}$ of $Y$ in the fiber of $v_{i}$ for each $i=1,\ldots,g_{X}$.
\begin{definition} \label{artinian}
Let $Y/X$ be an abelian cover of multigraphs with automorphism group $G$.
\begin{enumerate}
\item For $\sigma \in G$, we let the matrix $A(\sigma)$ to be the $g_{X} \times g_{X}$ matrix $A(\sigma)=(a_{ij}(\sigma))$ defined via
\begin{equation*}
a_{ij}(\sigma) = 
\begin{cases}
\text{Twice the number of undirected loops at the vertex }w_{i}, &\text{ if } i=j \text{ and } \sigma = 1;\\
\text{The number of undirected edges connecting $w_{i}$ to $w_{j}^{\sigma}$}, &\text{ otherwise}.
\end{cases}
\end{equation*}
\item If $\chi \in \widehat{G}$, then we let
$$A_{\chi} = \sum_{\sigma \in G} \chi(\sigma) \cdot A(\sigma). $$
\end{enumerate}
\end{definition}
It is important to notice that the matrices $A_{\chi}$ are hermitian.  Therefore, they are diagonalizable with real eigenvalues.  We can now state the following important theorem.
\begin{theorem}[Three-term determinant formula for $L$-functions] \label{three_term_L}
Let $Y/X$ be an abelian cover of multigraphs with automorphism group $G$ and let $\chi \in \widehat{G}$.  Then, we have
$$\frac{1}{L_{X}(u,\chi)} = (1-u^{2})^{-\chi(X)} \cdot {\rm det}(I - A_{\chi}u + (D - I)u^{2}).$$
\end{theorem}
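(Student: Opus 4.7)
The plan is to deduce Theorem~\ref{three_term_L} from Theorem~\ref{three_term} applied to $Y$, by exploiting the regular representation structure that $G$ induces on the matrices attached to the cover. First I would record the Artin-type factorization
$$\zeta_{Y}(u) = \prod_{\chi \in \widehat{G}} L_{X}(u,\chi),$$
which follows from the Euler products: primes of $Y$ are grouped by the prime of $X$ that they lie over, and orthogonality of characters of $G$ acting on the decomposition/Frobenius data rewrites the product over primes of $Y$ as the claimed product of twisted $L$-functions over $X$.

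Next, using the labeling $\{w_{i}^{\sigma} : 1 \le i \le g_{X},\ \sigma \in G\}$ of $V_{Y}$, I would exhibit the adjacency matrix $A_{Y}$ as a $g_{X} \times g_{X}$ block matrix whose $(i,j)$-block equals $\sum_{\sigma \in G} a_{ij}(\sigma)\, P_{\sigma}$, where $P_{\sigma}$ is the permutation matrix of $\sigma$ in the regular representation of $G$; each such block therefore lies in the image of the group algebra $\mathbb{C}[G]$. The valency matrix $D_{Y}$ and the identity are block-diagonal with $|G|$-fold copies of $D$ and $I$. Simultaneously diagonalizing the regular representation via the Fourier transform on $G$ decomposes $I - A_{Y}u + (D_{Y} - I)u^{2}$ into a direct sum of blocks indexed by $\chi \in \widehat{G}$, whose $\chi$-summand is precisely $I - A_{\chi}u + (D - I)u^{2}$. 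Since $\chi(Y) = |G| \cdot \chi(X)$, taking determinants and invoking Theorem~\ref{three_term} for $Y$ together with the Artin factorization yields the product version
$$\prod_{\chi \in \widehat{G}} \frac{1}{L_{X}(u,\chi)} = (1 - u^{2})^{-\chi(Y)} \prod_{\chi \in \widehat{G}} {\rm det}\bigl(I - A_{\chi} u + (D - I) u^{2}\bigr).$$

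The main obstacle is to upgrade this product identity to the desired character-by-character statement. The cleanest route is to apply the same argument to every intermediate cover $Y^{H}/X$ with $H \le G$: the corresponding Artin formalism reads $\zeta_{Y^{H}}(u) = \prod_{\chi \in \widehat{G/H}} L_{X}(u, \chi)$, and the block decomposition again produces a product identity, with the product restricted to the characters of $G$ trivial on $H$. Möbius inversion on the subgroup lattice of $G$ then isolates each individual $L_{X}(u,\chi)^{-1}$ as a prescribed ratio of the $\zeta_{Y^{H}}(u)^{\pm 1}$, and a parallel Möbius inversion on the right-hand side pins down each determinantal factor uniquely, giving the stated identity. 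Alternatively, one may bypass the subgroup recursion by computing the logarithmic derivatives of both sides as formal power series in $u$ whose coefficients count $\chi$-weighted closed reduced walks in $X$, and matching these coefficients; since both sides are then rational functions of the correct degree with identical expansions, Theorem~\ref{three_term_L} follows.
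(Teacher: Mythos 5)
The paper does not prove this statement; it simply cites Theorem 18.15 of Terras, so any honest argument here has to carry the full weight of the Ihara--Bass machinery itself. Your block-diagonalization step is correct and worth having: writing $A_{Y}$ as a $g_{X}\times g_{X}$ array of blocks $\sum_{\sigma}a_{ij}(\sigma)P_{\sigma}$ and Fourier-transforming the regular representation does yield $h_{Y}(u)=\prod_{\chi}h_{X}(u,\chi)$, and combined with Theorem~\ref{three_term} for $Y$ and the factorization of $\zeta_{Y}$ it gives the product identity you state. The gap is in the step that upgrades this to the character-by-character claim. M\"obius inversion over the subgroup lattice of $G$ cannot do it: the data you have access to are the zeta functions of the intermediate covers $Y^{H}$, i.e.\ the products $\prod_{\chi\in H^{\perp}}L_{X}(u,\chi)$ over \emph{subgroups} $H^{\perp}$ of $\widehat{G}$, and inverting over that lattice only isolates the product of $L_{X}(u,\chi)$ over all $\chi$ generating a fixed cyclic subgroup of $\widehat{G}$ --- that is, over a full Galois orbit. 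For $G\simeq\mathbb{Z}/\ell^{n}\mathbb{Z}$ (the case this paper cares about) the faithful characters form a single orbit of size $\varphi(\ell^{n})>1$, so no combination of intermediate covers separates them; knowing that a product of $\varphi(\ell^{n})$ conjugate factors on the left equals the corresponding product on the right does not pin down any individual factor.

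Your fallback via logarithmic derivatives is the correct route in principle --- it is essentially how the theorem is actually proved --- but as written it assumes the conclusion. The coefficient of $u^{m}$ in $u\,\frac{d}{du}\log L_{X}(u,\chi)$ is $N_{m}(\chi)=\sum_{C}\chi\bigl(\left(\frac{Y/X}{C}\right)\bigr)$ summed over closed backtrackless tailless paths of length $m$ in $X$; the entire content of the three-term formula is the nontrivial identity relating this generating function to $\mathrm{tr}$ of powers of $A_{\chi}$ and $D$, which requires either Bass's two-term edge-matrix determinant formula together with the reduction from the edge matrix to the vertex data, or Ihara's original recursion separating paths with and without tails. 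Saying the two sides ``have identical expansions'' is precisely what must be established, not something that follows from both being rational of the correct degree. To close the argument you would need to either carry out that combinatorial identity or, more efficiently, do what the paper does and cite the twisted Ihara--Bass theorem directly.
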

\begin{proof}
See Theorem $18.15$ of \cite{Terras:2011}.  
\end{proof}
From now on, we set
$$Z_{X}(u,\chi) = \frac{1}{L_{X}(u,\chi)} \in \mathbb{Z}[\chi][u] $$
and 
$$h_{X}(u,\chi) =  {\rm det}(I - A_{\chi}u + (D - I)u^{2}) \in \mathbb{Z}[\chi][u]. $$
Note that ${\rm deg}(Z_{X}(u,\chi)) = 2 |E_{X}|$ and ${\rm deg}(h_{X}(u,\chi)) = 2 g_{X}$ as well.

The Ihara zeta function of the cover factors into the product of the various $L$-functions as explained in the following theorem.
\begin{theorem}  \label{product_form}
Let $Y/X$ be an abelian cover of multigraphs.  Then one has
$$\zeta_{Y}(u) = \zeta_{X}(u) \cdot \prod_{\chi \neq \chi_{0}} L_{X}(u,\chi). $$
\end{theorem}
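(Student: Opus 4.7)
The plan is to prove the factorization prime-by-prime on the indexing of the Euler products, using standard Galois theory of covers together with a character identity. Taking the logarithm (or equivalently comparing Euler factors) reduces everything to matching contributions coming from each prime $\mathfrak{c}$ of $X$.

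First, I would fix a prime $\mathfrak{c}$ of $X$ and invoke the standard splitting theory for abelian covers of multigraphs: if $f = f(\mathfrak{c})$ denotes the order of the Frobenius $\bigl(\tfrac{Y/X}{\mathfrak{c}}\bigr)$ in $G$, then there are exactly $g = |G|/f$ primes $\mathfrak{C}_{1}, \ldots, \mathfrak{C}_{g}$ of $Y$ lying above $\mathfrak{c}$, each of length $l(\mathfrak{C}_{j}) = f \cdot l(\mathfrak{c})$. Consequently the contribution of these primes to $\zeta_{Y}(u)$ is
\[
\prod_{j=1}^{g}\bigl(1 - u^{l(\mathfrak{C}_{j})}\bigr)^{-1} = \bigl(1 - u^{f \cdot l(\mathfrak{c})}\bigr)^{-g}.
\]
On the other hand, the contribution of $\mathfrak{c}$ to $\prod_{\chi \in \widehat{G}} L_{X}(u,\chi)$ is
\[
\prod_{\chi \in \widehat{G}}\bigl(1 - \chi(\sigma_{\mathfrak{c}}) u^{l(\mathfrak{c})}\bigr)^{-1},
\]
where $\sigma_{\mathfrak{c}} = \bigl(\tfrac{Y/X}{\mathfrak{c}}\bigr)$.

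The next step is to apply the classical character-theoretic identity
\[
\prod_{\chi \in \widehat{G}}\bigl(1 - \chi(\sigma) T\bigr) = \bigl(1 - T^{f}\bigr)^{g}
\]
valid for any $\sigma \in G$ of order $f$, with $g = |G|/f$. This is proved by decomposing $\widehat{G}$ according to restriction to the cyclic subgroup $\langle \sigma \rangle$: on $\langle \sigma \rangle$ one has the factorization $\prod_{\psi \in \widehat{\langle \sigma \rangle}}(1 - \psi(\sigma) T) = 1 - T^{f}$ (since $\psi(\sigma)$ runs over the $f$-th roots of unity), and each character of $\langle \sigma \rangle$ admits exactly $g$ extensions to $G$. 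Applying this identity with $T = u^{l(\mathfrak{c})}$ matches the two Euler factors exactly.

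Multiplying over all primes $\mathfrak{c}$ of $X$ gives $\prod_{\chi \in \widehat{G}} L_{X}(u,\chi) = \zeta_{Y}(u)$, and pulling out the trivial character term $L_{X}(u,\chi_{0}) = \zeta_{X}(u)$ yields the stated formula. The only mildly delicate point is the splitting behaviour of primes in $Y/X$, but this is the graph-theoretic analogue of standard decomposition theory and is exactly the reason the Frobenius $\bigl(\tfrac{Y/X}{\mathfrak{c}}\bigr)$ is well-defined in the abelian case; all convergence issues are cosmetic since we work formally with power series, and the resulting identity then extends to the common domain of convergence by analytic continuation (or simply to the polynomial identity one obtains after inverting, using Theorems \ref{three_term} and \ref{three_term_L}).
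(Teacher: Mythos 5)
Your argument is correct. The paper itself offers no proof here, deferring entirely to Corollary 18.11 of Terras's book; there the factorization is obtained from the Artin formalism for these $L$-functions (the induction property applied to the trivial character of the trivial subgroup, so that $\zeta_{Y}(u)=L_{X}(u,\mathrm{Ind}_{\{1\}}^{G}1)=\prod_{\chi}L_{X}(u,\chi)$ in the abelian case). Your route is the direct Euler-product computation that underlies that formalism: you match, prime by prime, the factor $(1-u^{f\cdot l(\mathfrak{c})})^{-|G|/f}$ contributed to $\zeta_{Y}$ by the $|G|/f$ primes of $Y$ above $\mathfrak{c}$ against $\prod_{\chi}(1-\chi(\sigma_{\mathfrak{c}})u^{l(\mathfrak{c})})^{-1}$ via the identity $\prod_{\chi\in\widehat{G}}(1-\chi(\sigma)T)=(1-T^{f})^{|G|/f}$. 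Both ingredients you use are sound: graph covers are unramified, so the decomposition law really does say that a prime $\mathfrak{c}$ with Frobenius of order $f$ splits into $|G|/f$ primes each of length $f\cdot l(\mathfrak{c})$ (this is the splitting theorem for primes in Galois covers of graphs, Chapter 16 of Terras), and the character identity follows from surjectivity of restriction $\widehat{G}\to\widehat{\langle\sigma\rangle}$ with fibers of size $|G|/f$. The payoff of your version is that it is self-contained and makes visible exactly which fact about covers is being used; the cost is that you must import the splitting theorem explicitly, whereas the citation packages it inside the Artin formalism. Your closing remark on convergence is also handled correctly: the identity is one of formal Euler products, and after inversion it becomes a polynomial identity by Theorems \ref{three_term} and \ref{three_term_L}.
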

\begin{proof}
See Corollary $18.11$ in \cite{Terras:2011}.
\end{proof}
Note that one has $\chi(Y) = |G| \cdot \chi(X)$ and therefore, we also have
\begin{equation} \label{product_formula}
h_{Y}(u) = h_{X}(u) \prod_{\chi \neq \chi_{0}} h_{X}(u,\chi).
\end{equation}

The Artin-Ihara $L$-functions satisfy the usual Artin formalism.  (See Proposition $18.10$ in \cite{Terras:2011}.)  So, in particular we have the following useful result.
\begin{proposition} \label{inflation}
Let $Y/X$ be an abelian cover of multigraphs with Galois group $G$ and let $H$ be a subgroup of $G$.  Let also $\Gamma = G/H$.  If $\chi \in \widehat{G}$ is such that $H \subseteq {\rm ker}(\chi)$, then $\chi$ induces a character $\widetilde{\chi} \in \widehat{\Gamma}$.  One has
$$h_{X}(u,\chi) = h_{X}(u,\widetilde{\chi}). $$
\end{proposition}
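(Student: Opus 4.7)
The natural plan is to route everything through the Artin formalism for Artin--Ihara $L$-functions (cited in the paper as Proposition~18.10 of \cite{Terras:2011}) and then invoke the three-term determinant formula (Theorem~\ref{three_term_L}) to translate the equality of $L$-functions into an equality of the polynomials $h_X(u,\chi)$.

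More precisely, let $Z$ denote the intermediate cover $Y/H$, so that $Z/X$ is an abelian cover with Galois group $\Gamma = G/H$, and $\widetilde{\chi} \in \widehat{\Gamma}$ is the character induced by $\chi$ via the hypothesis $H \subseteq \ker(\chi)$. The inflation part of Artin formalism asserts that
\[
L_X(u,\chi) = L_X(u,\widetilde{\chi}),
\]
where the left-hand side is computed with respect to the cover $Y/X$ and the right-hand side with respect to $Z/X$. Taking reciprocals and applying Theorem~\ref{three_term_L} to both sides yields
\[
(1-u^2)^{-\chi(X)} \cdot h_X(u,\chi) = (1-u^2)^{-\chi(X)} \cdot h_X(u,\widetilde{\chi}),
\]
since the base multigraph $X$, and hence its Euler characteristic $\chi(X)$, is the same on both sides. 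Cancelling the common factor $(1-u^2)^{-\chi(X)}$ in the field of rational functions $\mathbb{Z}[\chi](u)$ gives the desired identity.

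There is essentially no obstacle beyond invoking the right black box; the only care needed is notational, namely that $h_X(u,\chi)$ is defined in Definition~\ref{artinian} via matrices $A(\sigma)$ that depend on a choice of lifts $w_i \in V_Y$ of $v_i \in V_X$, while $h_X(u,\widetilde{\chi})$ would be defined using lifts in $V_Z$. Going through the $L$-functions sidesteps this dependence entirely, since both $L$-functions admit intrinsic definitions as Euler products over primes of $X$ weighted by Frobenius elements, and a character that factors through $\Gamma$ cannot distinguish a Frobenius in $G$ from its image in $\Gamma$. This is precisely what makes the Artin formalism applicable.
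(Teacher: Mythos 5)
Your proposal is correct and follows essentially the same route as the paper: invoke the inflation property from the Artin formalism to get $L_X(u,\chi) = L_X(u,\widetilde{\chi})$, then use the three-term determinant formula to cancel the common factor $(1-u^2)^{-\chi(X)}$ and conclude $h_X(u,\chi) = h_X(u,\widetilde{\chi})$. Your additional remark about why passing through the intrinsic Euler-product definition sidesteps the dependence on the choice of lifts $w_i$ is a nice clarification but does not change the argument.
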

\begin{proof}
This follows from the corresponding property of Artin-Ihara $L$-functions and noticing that 
$$\frac{Z_{X}(u,\chi)}{h_{X}(u,\chi)} = (1-u^{2})^{-\chi(X)} = \frac{Z_{X}(u,\widetilde{\chi})}{h_{X}(u,\widetilde{\chi})}. $$
\end{proof}

\section{The number of spanning trees in abelian covers of multigraphs} \label{number}
As before, $X$ denotes a multigraph.  Since the Laplacian $Q = D- A$ is a singular matrix, we have $h_{X}(1) = 0$.
\begin{theorem} \label{special_value}
Let $X$ be a multigraph.  One has
$$h_{X}'(1) = -2 \chi(X) \kappa_{X}, $$
and if $\chi(X) = 0$, then $h_{X}''(1) = 2 g_{X}^{2}$.
\end{theorem}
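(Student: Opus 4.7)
My plan is to treat both assertions by applying Jacobi's formula for determinants to $M(u) := I - Au + (D-I)u^{2}$, using that $M(1) = D - A = Q$ is the (singular) Laplacian of $X$. For the first assertion, Jacobi gives $h_{X}'(1) = \mathrm{tr}(\mathrm{adj}(Q)\cdot M'(1))$ with $M'(1) = 2D - A - 2I$. Since $X$ is connected, $\ker Q = \mathrm{span}(\mathbf{1})$, and from $Q\cdot\mathrm{adj}(Q) = \mathrm{adj}(Q)\cdot Q = 0$ one deduces that every row and every column of $\mathrm{adj}(Q)$ is constant; combined with the Matrix-Tree theorem (each diagonal cofactor of $Q$ equals $\kappa_{X}$) this forces $\mathrm{adj}(Q) = \kappa_{X} J$, where $J$ is the all-ones matrix. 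Thus $h_{X}'(1)$ is $\kappa_{X}$ times the sum of all entries of $M'(1)$; a row-by-row computation using $\sum_{i} d_{ii} = 2|E_{X}|$ and $\sum_{j}a_{ij} = d_{ii}$ shows this sum equals $4|E_{X}| - 2|E_{X}| - 2g_{X} = -2\chi(X)$, yielding $h_{X}'(1) = -2\chi(X)\kappa_{X}$.

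For the second assertion the key preliminary is structural: under the blanket assumption that $X$ has no degree-one vertex, the hypothesis $\chi(X) = 0$ forces $X$ to be $2$-regular. Indeed $|V_{X}| = |E_{X}|$ makes the average valency equal to $2$, and since every valency is at least $2$, every valency equals exactly $2$, so $D = 2I$. With this in hand, the cornerstone of the computation is the clean rewriting
$$M(u) = u Q + (u-1)^{2} I,$$
verified by expanding the right-hand side to $(u^{2}+1)I - uA$. Passing to an orthonormal eigenbasis of $Q$ with eigenvalues $0 = \mu_{1} < \mu_{2} \leq \cdots \leq \mu_{g_{X}}$, the determinant factors as
$$h_{X}(u) = (u-1)^{2}\prod_{k=2}^{g_{X}}\bigl(u\mu_{k} + (u-1)^{2}\bigr),$$
and a direct differentiation gives $h_{X}''(1) = 2\prod_{k=2}^{g_{X}}\mu_{k} = 2 g_{X}\kappa_{X}$, where the last equality is the Matrix-Tree theorem in the form $\prod_{k\geq 2}\mu_{k} = g_{X}\kappa_{X}$.

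To upgrade $2g_{X}\kappa_{X}$ to $2g_{X}^{2}$ I still need $\kappa_{X} = g_{X}$, which is a short classification argument: a connected $2$-regular multigraph must be a generalized cycle, namely either $C_{n}$ for some $n \geq 3$, two vertices joined by a pair of parallel edges, or the single-vertex graph carrying one loop; in each case every edge lies on the unique topological cycle of $X$, so removing any one edge leaves a spanning tree, and $\kappa_{X} = |E_{X}| = g_{X}$. The only real obstacle in the whole argument is spotting the identity $M(u) = uQ + (u-1)^{2} I$; once that is in hand, what might otherwise demand a delicate second-order perturbation expansion collapses into a transparent factorization.
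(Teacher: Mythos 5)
Your proof is correct and complete. Note that the paper itself does not prove this statement: it simply cites Northshield (1998), so you have supplied a self-contained argument where the source defers to a reference. Both halves of your argument check out. For the first, Jacobi's formula together with the identification $\mathrm{adj}(Q)=\kappa_X J$ (columns and rows of the adjugate lie in $\ker Q=\mathrm{span}(\mathbf 1)$, and the diagonal cofactors are $\kappa_X$ by Matrix--Tree) reduces everything to summing the entries of $M'(1)=2D-A-2I$, and your count $4|E_X|-2|E_X|-2g_X=-2\chi(X)$ is right since both $\sum_i d_{ii}$ and $\sum_{i,j}a_{ij}$ equal $2|E_X|$. For the second, the observation that $\chi(X)=0$ forces $2$-regularity (under the paper's standing no-degree-one-vertex hypothesis), the identity $M(u)=uQ+(u-1)^2I$, and the eigenvalue form of Matrix--Tree $\prod_{k\ge 2}\mu_k=g_X\kappa_X$ give $h_X''(1)=2g_X\kappa_X$; the final step $\kappa_X=g_X$ for a connected $2$-regular multigraph handles the degenerate cases (single vertex with a loop, two vertices with a doubled edge) correctly under the paper's convention that a one-vertex graph has $\kappa_X=1$. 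This is consistent with the paper's own remark that $\chi(X)=0$ exactly when $X$ is a cycle graph, for which $\kappa_{C_g}=g$. The only stylistic suggestion is to state explicitly that you are using the symmetry of $Q$ when passing from ``columns of $\mathrm{adj}(Q)$ are constant'' to ``rows are constant,'' but this is immediate.
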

\begin{proof}
See \cite{Northshield:1998}.
\end{proof}
Since we are assuming that our multigraphs do not have vertices of degree one, we have $\chi(X) = 0$ if and only if $X = C_{g}$, the cycle graph on $g$ vertices.  If $\chi(X) = 0$, then $b_{1}(X) = 1$ and thus $H_{1}(X,\mathbb{Z}) \simeq \mathbb{Z}$.  It follows from (\ref{rec_map}) that for every $m \ge 1$, there is a unique abelian cover of $X$ of degree $m$ and its Galois group is isomorphic to $\mathbb{Z}/m\mathbb{Z}$.  If $Y/X$ is such a cover and $X = C_{g}$,  then $Y$ is isomorphic to the cycle graph $C_{|G|g}$, since $\chi(Y) = |G| \cdot \chi(X)$.

If $Y/X$ is an abelian cover, then (\ref{product_formula}) implies 
$$h_{Y}'(1) = h_{X}'(1) \prod_{\chi \neq \chi_{0}} h_{X}(1,\chi). $$
In particular, if $\chi$ is a non-trivial character, then $h_{X}(1,\chi) \neq 0$ when $\chi(X)\neq 0$.  (This is also true if $\chi(X) = 0$, but not because of this last equation.)  If $\chi(X) \neq 0$, using Theorem \ref{special_value}, one has
$$|G| \cdot \kappa_{Y} = \kappa_{X} \prod_{\chi \neq \chi_{0}}h_{X}(1,\chi), $$
and since $h_{X}(u,\chi) \in \mathbb{Z}[\chi][u]$, we have $h_{X}(1,\chi) \in \mathbb{Z}[\chi]$. 

From now on, we consider the  algebraic closure $\overline{\mathbb{Q}}$ of $\mathbb{Q}$ in $\mathbb{C}$.  The absolute Galois group ${\rm Gal}(\overline{\mathbb{Q}}/\mathbb{Q})$ will be denoted by $G_{\mathbb{Q}}$.  The group $G_{\mathbb{Q}}$ acts on $\widehat{G}$ via $\chi^{\tau} = \tau \circ \chi$ whenever $\tau \in G_{\mathbb{Q}}$.  Two characters $\chi_{1}, \chi_{2} \in \widehat{G}$ are in the same orbit if and only if
$${\rm ker}(\chi_{1}) = {\rm ker}(\chi_{2}). $$
An orbit will usually be denoted by a symbol such as $\Psi$.  The orbit of the trivial character $\chi_{0}$ consists of the trivial character only and will be denoted by $\Psi_{0}$.

Given an orbit $\Psi$, we let 
$$h_{X}(u,\Psi) = \prod_{\chi \in \Psi}h_{X}(u,\chi). $$
Then $h_{X}(u,\Psi) \in \mathbb{Z}[u]$ and $h_{X}(1,\Psi) \in \mathbb{Z}$.  Furthermore, we have the following decomposition of $|G| \cdot \kappa_{Y}$ as a product of integers:
\begin{equation} \label{integer_decomposition}
|G| \cdot \kappa_{Y} = \kappa_{X} \prod_{\Psi \neq \Psi_{0}} h_{X}(1,\Psi).
\end{equation}
In this last equation, the product is over all orbits $\Psi \in G_{\mathbb{Q}} \backslash \widehat{G}$ satisfying $\Psi \neq \Psi_{0}$, and we are still assuming $\chi(X) \neq 0$.

\section{An upper bound for the number of spanning trees in regular abelian $\ell$-towers} \label{upper_bound_1}
Let us start with the following definition, where from now on, $\ell$ denotes a rational prime number.
\begin{definition}
An abelian $\ell$-tower of multigraphs above a multigraph $X$ is a sequence of covers
$$X = X_{0} \longleftarrow X_{1} \longleftarrow X_{2} \longleftarrow \ldots \longleftarrow X_{n} \longleftarrow \ldots$$
such that:
\begin{enumerate}
\item For each $n \ge 0$, the cover $X_{n+1}/X_{n}$ is abelian with Galois group isomorphic to $\mathbb{Z}/\ell\mathbb{Z}$,
\item For each $n \ge 0$, the cover $X_{n}/X$ is abelian with Galois group isomorphic to $\mathbb{Z}/\ell^{n}\mathbb{Z}$.
\end{enumerate}
\end{definition}
At the layer $n$ of an abelian $\ell$-tower, we will write $\kappa_{n}$ instead of $\kappa_{X_{n}}$ and similarly for any other notation involving $X_{n}$ as an index.  Note that since $H_{1}(X,\mathbb{Z}) \simeq \mathbb{Z}^{b_{1}(X)}$, such abelian $\ell$-towers always exist for every multigraph $X$.  If $X = C_{g}$, then there exists a unique such abelian $\ell$-tower
$$X = X_{0} \longleftarrow X_{1} \longleftarrow X_{2} \longleftarrow \ldots \longleftarrow X_{n} \longleftarrow \ldots,$$
where $X_{n} = C_{\ell^{n}g}$.  In this case, we have $\kappa_{n} = \ell^{n}g$ and 
$${\rm ord}_{\ell}(\kappa_{n}) =  n + {\rm ord}_{\ell}(g). $$

Let now $q$ be a positive integer.  Recall that a multigraph $X$ is called $(q+1)$-regular if
$${\rm val}_{X}(v) = q+1$$
for all $v \in V_{X}$.  A multigraph $X$ is $2$-regular if and only if $X$ a cycle graph if and only if $\chi(X) = 0$. Given a multigraph $X$, we let ${\rm Spec}(X)$ be the set of eigenvalues of the adjacency matrix of $X$.  These eigenvalues are necessarily real, since the adjacency matrix is symmetric.  If $X$ is $(q+1)$-regular, then it is known that $(q+1) \in {\rm Spec}(X)$ with multiplicity one, and also that if $\lambda \in {\rm Spec}(X)$, then
$$|\lambda| \le q+1.$$
For a multigraph $X$ on $g_{X}$ vertices, we will always label the eigenvalues as follows:
$$-(q+1) \le \lambda_{1} \le \ldots \le \lambda_{g_{X}-1} < \lambda_{g_{X}} = q + 1. $$
For a $(q+1)$-regular multigraph $X$, we have
\begin{equation} \label{h_regular}
\begin{aligned}
h_{X}(u) &= {\rm det}(I - Au + qIu^{2}) \\
&= \prod_{i=1}^{g_{X}} (1- \lambda u + q u^{2}).
\end{aligned}
\end{equation}
If $Y/X$ is an abelian cover of multigraphs, and $X$ is $(q+1)$-regular, then so is $Y$, since the projection map is a local homeomorphism.  Furthermore, if $\chi \in \widehat{G}$, then
\begin{equation*}
\begin{aligned}
h_{X}(u,\chi) &= {\rm det}(I - A_{\chi}u + qIu^{2}) \\
&= \prod_{i=1}^{g_{X}}(1 - \lambda_{\chi,i}u + qu^{2}),
\end{aligned}
\end{equation*}
where the $\lambda_{\chi,i}$ are the real eigenvalues of $A_{\chi}$ (recall that the matrices $A_{\chi}$ are hermitian).  In particular, we have
$$h_{X}(u,\chi) \in \mathbb{Z}[\chi]^{+}[u] \text{ and } h_{X}(1,\chi) \in \mathbb{Z}[\chi]^{+}. $$
If 
$$X = X_{0} \longleftarrow X_{1} \longleftarrow X_{2} \longleftarrow \ldots \longleftarrow X_{n} \longleftarrow \ldots$$
is an abelian $\ell$-tower of a $(q+1)$-regular multigraph $X$, then
$${\rm Spec}(X_{n}) \subseteq [-(q+1),q+1], $$
for all $n \ge 0$.  Such an abelian $\ell$-tower of multigraphs will be called a regular abelian $\ell$-tower.

\begin{theorem} \label{upper_bound}
Let $X$ be a $(q+1)$-regular multigraph such that $\chi(X) \neq 0$, and assume that
$$X = X_{0} \longleftarrow X_{1} \longleftarrow X_{2} \longleftarrow \ldots \longleftarrow X_{n} \longleftarrow \ldots$$
is a regular abelian $\ell$-tower over $X$.  Then
$$\kappa_{n} \le A \cdot \frac{(2(q+1))^{\ell^{n}g}}{\ell^{n}},$$
where $g = g_{X}$ and
$$A = \frac{1}{4 |\chi(X)|} \frac{q-1}{q+1}. $$
Thus, in particular
$${\rm ord}_{\ell}(\kappa_{n}) \le g \log_{\ell}(2(q+1)) \ell^{n} - n + \log_{\ell}(A).$$
\end{theorem}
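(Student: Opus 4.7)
The plan is to apply Equation (\ref{integer_decomposition}) to $Y = X_n$ with $G \cong \mathbb{Z}/\ell^n\mathbb{Z}$; since the product over orbits equals the product over all individual nontrivial characters, this gives
\[
\ell^n \kappa_n \;=\; \kappa_X \prod_{\chi \neq \chi_0} h_X(1,\chi).
\]
An upper bound for $\kappa_n$ is then obtained from a uniform upper bound on each factor $h_X(1,\chi)$, combined with a sufficiently sharp upper bound on $\kappa_X$ itself so that the constant matches $A$.

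For the nontrivial factors, since $X$ is $(q+1)$-regular we can use the analogue of (\ref{h_regular}) for twisted matrices, namely $h_X(1,\chi) = \prod_{i=1}^{g}(q+1 - \lambda_{\chi,i})$. The eigenvalues of the hermitian matrix $A_\chi$ appear among the eigenvalues of the adjacency matrix of $X_n$ (via Theorem \ref{three_term_L} and Theorem \ref{product_form}), and $X_n$ is $(q+1)$-regular as well, so $|\lambda_{\chi,i}| \le q+1$. Therefore each factor $q+1 - \lambda_{\chi,i}$ lies in $[0, 2(q+1)]$, which yields $0 \le h_X(1,\chi) \le (2(q+1))^{g}$. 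Taking the product over the $\ell^n - 1$ nontrivial characters gives
\[
\ell^n \kappa_n \;\le\; \kappa_X \cdot (2(q+1))^{g(\ell^n - 1)}.
\]

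To bound $\kappa_X$ itself I would differentiate (\ref{h_regular}) at $u = 1$. Because the factor indexed by $i = g$ vanishes at $u = 1$ (since $\lambda_g = q+1$), the only surviving summand in $h_X'(1)$ is the one where we differentiate that factor, producing $h_X'(1) = (q-1) \prod_{i=1}^{g-1}(q+1 - \lambda_i)$. Plugging into Theorem \ref{special_value} and using $\chi(X) < 0$ (forced by $\chi(X) \neq 0$ combined with $(q+1)$-regularity, so $q \ge 2$) gives
\[
\kappa_X \;=\; \frac{q-1}{2|\chi(X)|} \prod_{i=1}^{g-1}(q+1 - \lambda_i) \;\le\; \frac{q-1}{2|\chi(X)|} (2(q+1))^{g-1}.
\]
Substituting into the preceding inequality and collecting one extra factor of $2(q+1)$ produces
\[
\kappa_n \;\le\; \frac{q-1}{4|\chi(X)|(q+1)} \cdot \frac{(2(q+1))^{g\ell^n}}{\ell^n} \;=\; A \cdot \frac{(2(q+1))^{g\ell^n}}{\ell^n}.
\]
The stated bound on ${\rm ord}_{\ell}(\kappa_n)$ then drops out by taking $\log_\ell$ and using ${\rm ord}_{\ell}(\kappa_n) \le \log_\ell(\kappa_n)$. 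The only point requiring care is the positivity of $h_X(1,\chi)$ for $\chi \neq \chi_0$, so that no absolute values are lost when we multiply the individual bounds together; this is handled by the spectral containment ${\rm Spec}(X_n) \subseteq [-(q+1), q+1]$, which is the only nontrivial input needed beyond the formulas already developed in the paper.
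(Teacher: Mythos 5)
Your argument is correct and lands on exactly the constant $A$ in the statement, but it is organized differently from the paper's proof. The paper works directly with the top graph: it applies (\ref{h_regular}) and Theorem \ref{special_value} to $X_{n}$ itself, differentiating $h_{n}(u)=\prod_{i=1}^{g_{n}}(1-\lambda_{i}u+qu^{2})$ at $u=1$ to get $2\ell^{n}|\chi(X)|\kappa_{n}=(q-1)\prod_{i=1}^{g_{n}-1}(q+1-\lambda_{i})$, and then bounds each of the $g\ell^{n}-1$ factors by $2(q+1)$ using only the containment ${\rm Spec}(X_{n})\subseteq[-(q+1),q+1]$, which is immediate from $(q+1)$-regularity of $X_{n}$. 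You instead route through the character decomposition (\ref{integer_decomposition}), bounding $\kappa_{X}$ by the same differentiation trick applied to $X$ and each $h_{X}(1,\chi)$ by $(2(q+1))^{g}$; since $(g-1)+g(\ell^{n}-1)=g\ell^{n}-1$, the two accountings of factors coincide and the bounds agree. What the paper's route buys is that it never needs spectral information about the twisted matrices $A_{\chi}$. The one step you should tighten is the claim that the eigenvalues of $A_{\chi}$ ``appear among'' those of $X_{n}$ via Theorems \ref{three_term_L} and \ref{product_form}: the polynomial identity (\ref{product_formula}) only gives equality of the multisets of roots of the quadratics $1-\lambda u+qu^{2}$, and recovering the $\lambda$'s requires pairing roots with product $q$, which is not formally unique. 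The clean justifications are either the standard block-diagonalization of the adjacency operator of an abelian cover into the $A_{\chi}$ (which is what underlies Theorem \ref{product_form}), or a direct estimate: $A_{\chi}$ is hermitian and entrywise dominated in absolute value by $\sum_{\sigma\in G}A(\sigma)$, whose row sums equal $q+1$, so its spectral radius is at most $q+1$. With that point patched, your proof is a valid, slightly longer variant of the paper's.
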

\begin{proof}
Using (\ref{h_regular}), we have
$$h_{n}(u) = \prod_{i=1}^{g_{n}}(1-\lambda_{i}u + qu^{2}). $$
Differentiating this last equation and evaluating at $1$ gives
$$h_{n}'(1) = (q-1) \prod_{i=1}^{g_{n} - 1}(1 - \lambda_{i} + q), $$
and thus
$$2|\chi(X_{n})| \kappa_{n} = (q-1) \prod_{i=1}^{g_{n} - 1}(1 - \lambda_{i} + q).$$
Therefore,
$$2 \ell^{n} |\chi(X)| \kappa_{n} \le \frac{1}{2} \frac{q-1}{q+1} (2(q+1))^{\ell^{n}g}.$$
The result follows as once.
\end{proof}

\section{Regular abelian $\ell$-towers of bouquets} \label{over_bouquet}
Throughout this section, we assume that $g_{X} = 1$ so that $X$ is a bouquet.
\subsection{A lower bound} \label{lower_bound_1}
Let $q$ be an odd integer greater than $2$ and assume that $X$ is $(q+1)$-regular.  Thus $X$ is a bouquet with $(q+1)/2$ loops and $\chi(X) = (1-q)/2 \neq 0$.  We take an abelian $\ell$-tower
$$X = X_{0} \longleftarrow X_{1} \longleftarrow X_{2} \longleftarrow \ldots \longleftarrow X_{n} \longleftarrow \ldots$$
above the bouquet $X$.  

We let $G_{n} = {\rm Gal}(X_{n}/X) \simeq \mathbb{Z}/\ell^{n} \mathbb{Z}$.  Two characters $\chi_{1}, \chi_{2} \in \widehat{G}_{n}$ satisfy ${\rm ker}(\chi_{1}) = {\rm ker}(\chi_{2})$ if and only if they have the same order.  The orbits of $\widehat{G}_{n}$ under the actions of $G_{\mathbb{Q}}$ will be denoted by $\Psi_{i,n}$ for $i=0,\ldots,n$.  So $\Psi_{i,n}$ consists of the characters of $G_{n}$ of order precisely $\ell^{i}$.  We have
$$|\Psi_{i,n}| = \varphi(\ell^{i}). $$
Formula (\ref{integer_decomposition}) becomes then
\begin{equation} \label{integer_decomposition_prime}
\ell^{n} \cdot \kappa_{n} = \prod_{i=1}^{n} h_{X}(1,\Psi_{i,n}),
\end{equation}
since $\kappa_{X} = 1$.  By Proposition \ref{inflation}, for any $i$ and $n$ satisfying $i=0,1,\ldots,n$, we have
$$h_{X}(u,\Psi_{i,i}) = h_{X}(u,\Psi_{i,n}).$$
Therefore, (\ref{integer_decomposition_prime}) gives
\begin{equation}
\begin{aligned}
{\rm ord}_{\ell}(\kappa_{n}) + n &= \sum_{i=1}^{n} {\rm ord}_{\ell}(h_{X}(1,\Psi_{i,n})) \\
&= \sum_{i=1}^{n} {\rm ord}_{\ell}(h_{X}(1,\Psi_{i,i}))
\end{aligned}
\end{equation}
In other words, we are reduced to studying the orbits consisting of the faithful characters at each layer.  

If $\chi \in \Psi_{i,i}$, then $\mathbb{Q}(\chi) = \mathbb{Q}(\zeta_{\ell^{i}})$, and
\begin{equation} \label{simple_for}
h_{X}(1,\chi) = 1 - \lambda_{\chi} + q \in \mathbb{Z}[\zeta_{\ell^{i}}]^{+},
\end{equation}
where $\lambda_{\chi} = A_{\chi}$ of Definition \ref{artinian}.

Since $\mathcal{L}_{i}$ is the unique prime lying above $\ell$, we have
$${\rm ord}_{\mathcal{L}_{i}}(h_{X}(1,\chi)) = {\rm ord}_{\mathcal{L}_{i}}(h_{X}(1,\chi^{\tau})) $$
for all $\tau \in G_{\mathbb{Q}}$.  Putting this together, we obtain
\begin{equation*}
\begin{aligned}
{\rm ord}_{\ell}(h_{X}(1,\Psi_{i,i}))  &= \frac{1}{\varphi(\ell^{i})}{\rm ord}_{\mathcal{L}_{i}}\left(\prod_{\chi \in \Psi_{i,i}}h_{X}(1,\chi) \right)\\
&= {\rm ord}_{\mathcal{L}_{i}}(h_{X}(1,\chi)),
\end{aligned}
\end{equation*}
where $\chi$ is any character in $\Psi_{i,i}$.  It follows that if we choose a faithful character $\chi_{i}$ of $\widehat{G}_{i}$ for each $i \ge 1$, we have
\begin{equation} \label{starting_point}
{\rm ord}_{\ell}(\kappa_{n}) = -n + \sum_{i=1}^{n}{\rm ord}_{\mathcal{L}_{i}}(h_{X}(1,\chi_{i})).
\end{equation}
If we assume furthermore that $\ell \neq 2$, then we get
\begin{equation} \label{extra_two}
{\rm ord}_{\ell}(\kappa_{n}) =  -n + 2\sum_{i=1}^{n}{\rm ord}_{\mathcal{L}_{i}^{+}}(h_{X}(1,\chi_{i})),
\end{equation}
whereas, if $\ell = 2$, we get instead
\begin{equation} \label{extra_two_2}
{\rm ord}_{\ell}(\kappa_{n}) =  -n + {\rm ord}_{\mathcal{L}_{1}}(h_{X}(1,\chi_{1})) + 2\sum_{i=2}^{n}{\rm ord}_{\mathcal{L}_{i}^{+}}(h_{X}(1,\chi_{i})).
\end{equation}

\begin{corollary}
In any abelian $\ell$-tower
$$X = X_{0} \longleftarrow X_{1} \longleftarrow X_{2} \longleftarrow \ldots \longleftarrow X_{n} \longleftarrow \ldots$$
of a bouquet, one has
$${\rm ord}_{\ell}(\kappa_{n}) \ge n.$$
\end{corollary}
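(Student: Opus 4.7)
The plan is to invoke (\ref{starting_point}) and show that each summand ${\rm ord}_{\mathcal{L}_i}(h_X(1,\chi_i))$ is at least $2$, so that
$${\rm ord}_\ell(\kappa_n) = -n + \sum_{i=1}^n {\rm ord}_{\mathcal{L}_i}(h_X(1,\chi_i)) \ge -n + 2n = n.$$
Since $g_X = 1$, each matrix $A(\sigma)$ of Definition~\ref{artinian} is a single non-negative integer $a_i(\sigma)$, and $\sum_{\sigma \in G_i} a_i(\sigma) = q+1$ recovers the valency of the base vertex. Combined with (\ref{simple_for}), this yields
$$h_X(1,\chi_i) = \sum_{\sigma \in G_i}(1-\chi_i(\sigma))\, a_i(\sigma).$$

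Because edges are undirected, $a_i(\sigma) = a_i(-\sigma)$. I would pair each nonzero $\sigma$ with $-\sigma$. When $\sigma \neq -\sigma$, the joint contribution simplifies via the identity $(1-x) + (1-x^{-1}) = (1-x)(1-x^{-1})$ applied at $x = \chi_i(\sigma)$ to
$$(1-\chi_i(\sigma))(1-\chi_i(-\sigma))\, a_i(\sigma),$$
which lies in $\mathcal{L}_i^2$ because each factor $1-\chi_i(\pm\sigma)$ belongs to $\mathcal{L}_i$ (since $\zeta_{\ell^i}\equiv 1\pmod{\mathcal{L}_i}$). A singleton survives precisely when $\sigma=-\sigma\ne 0$, which happens only for $\ell=2$ at the unique order-$2$ element $\sigma_0 \in G_i$; there $\chi_i(\sigma_0) = -1$, so the contribution is $2a_i(\sigma_0)$. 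The graph-theoretic input is that every loop of $X$ with holonomy of order $2$ lifts to a cycle of length $2$ in $X_i$, namely two parallel edges between $w_1$ and $w_{\sigma_0}$; this forces $a_i(\sigma_0)$ to be even, so the singleton is divisible by $4$, and since $v_{\mathcal{L}_i}(4) = 2^i \ge 2$, it too lies in $\mathcal{L}_i^2$. Summing over $\sigma$, we conclude $h_X(1,\chi_i) \in \mathcal{L}_i^2$.

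The main obstacle is uniformity in $\ell$: the slick alternative of deducing $h_X(1,\chi_i) \in \mathcal{L}_i^+$ from its membership in $\mathbb{Z}[\chi_i]^+$ (the matrices $A_{\chi_i}$ are Hermitian, so $\lambda_{\chi_i} \in \mathbb{R}$) and thereby obtaining ${\rm ord}_{\mathcal{L}_i} \ge 2$ via the ramification $e(\mathcal{L}_i/\mathcal{L}_i^+) = 2$ breaks down precisely at $\ell=2,\, i=1$, where $\mathcal{L}_1 = \mathcal{L}_1^+ = (2)$. The parity of $a_i(\sigma_0)$, supplied by the ``double edge'' lift of an order-$2$ holonomy loop, patches this gap uniformly.
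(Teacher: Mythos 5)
Your argument is correct, and it reaches the conclusion by a genuinely different route than the paper. The paper does not estimate ${\rm ord}_{\mathcal{L}_i}(h_X(1,\chi_i))$ directly: it first gets the weaker bound $\ge 1$ from the divisibility $\kappa_{X}\mid\kappa_{Y}$ for abelian covers (Corollary 4.15 of \cite{Baker/Norine:2009}), which makes $n\mapsto{\rm ord}_\ell(\kappa_n)$ nondecreasing and, fed into (\ref{starting_point}), forces each new summand to be positive; it then doubles this to $\ge 2$ using ${\rm ord}_{\mathcal{L}_i}=2\,{\rm ord}_{\mathcal{L}_i^+}$ on $\mathbb{Q}(\zeta_{\ell^i})^+$, and handles the one degenerate case $\ell=2$, $i=1$ (where $\mathbb{Q}(\zeta_2)=\mathbb{Q}$ and no doubling is available) by checking directly that $\kappa_1$ is even via a valency-parity count on the two-vertex graph $X_1$. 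You instead prove ${\rm ord}_{\mathcal{L}_i}(h_X(1,\chi_i))\ge 2$ outright by a self-contained local computation, dispensing with the external divisibility input entirely. Your pairing of $\sigma$ with $-\sigma$ is in fact the general-cover form of the identity (\ref{link_alg}) that the paper establishes later for Cayley--Serre multigraphs: regrouping $\sum_{\sigma}(1-\chi_i(\sigma))a_i(\sigma)$ loop by loop yields $\sum_{s}(1-\chi_i(i(s)))(1-\overline{\chi_i}(i(s)))$, each term of which lies in $\mathcal{L}_i^2$ by Lemma \ref{valua}, and in that form even your $\ell=2$ singleton needs no separate treatment, since a loop whose holonomy has order two contributes $(1-(-1))(1-(-1))=4$ directly --- your observation that such a loop lifts to two parallel edges between $w_1$ and $w_{\sigma_0}$ is precisely why the two bookkeepings agree. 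So your approach buys uniformity in $\ell$ and independence from the Baker--Norine divisibility theorem; the paper's buys brevity at the cost of invoking that theorem and an ad hoc patch at $\ell=2$. The only loose end in your write-up is the aside about the ``slick alternative'': membership of $h_X(1,\chi_i)$ in $\mathbb{Z}[\chi_i]^+$ by itself only gives that ${\rm ord}_{\mathcal{L}_i}$ of it is \emph{even}, not that it is positive, so that route would still need the $\ge 1$ input the paper extracts from divisibility; but since you do not rely on it, this does not affect your proof.
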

\begin{proof}
We assume first that $\ell$ is odd.  We claim that
$${\rm ord}_{\mathcal{L}_{n}}(h_{X}(1,\chi_{n})) \ge 1 $$
for all $n \ge 1$.  Indeed, first we have 
\begin{equation} \label{case_un}
{\rm ord}_{\mathcal{L}_{1}}(h_{X}(1,\chi_{1})) - 1 \ge 0, 
\end{equation}
by (\ref{starting_point}) with $n = 1$.  Now, for any abelian cover $Y/X$ of multigraphs, we have $\kappa_{X} \, | \, \kappa_{Y}$ by Corollary $4.15$ of \cite{Baker/Norine:2009}.  (See also Corollary $4.10$ of \cite{Hammer:2020}.)  Thus, ${\rm ord}_{\ell}(\kappa_{n}) \le {\rm ord}_{\ell}(\kappa_{n+1})$,  and we have
$$-n + \sum_{i=1}^{n}{\rm ord}_{\mathcal{L}_{i}}(h_{X}(1,\chi_{i})) \le -(n+1) + \sum_{i=1}^{n+1}{\rm ord}_{\mathcal{L}_{i}}(h_{X}(1,\chi_{i})). $$
Therefore,
$$ {\rm ord}_{\mathcal{L}_{n+1}}(h_{X}(1,\chi_{n+1})) - 1 \ge 0, $$
for all $n \ge 1$.  Combined with (\ref{case_un}), we get
$$ {\rm ord}_{\mathcal{L}_{n}}(h_{X}(1,\chi_{n})) \ge 1, $$
for all $n \ge 1$.

Now, since
$${\rm ord}_{\mathcal{L}_{i}}(x) = 2 \cdot {\rm ord}_{\mathcal{L}_{i}^{+}}(x)$$
for all $x \in \mathbb{Q}(\zeta_{\ell^{i}})^{+}$, we also have
$${\rm ord}_{\mathcal{L}_{i}^{+}}(h_{X}(1,\chi_{i})) \ge 1. $$
Using (\ref{extra_two}) gives the desired result.

If $\ell = 2$, then we claim first that $2 \, | \, \kappa_{1}$.  Indeed, $X_{1}$ has two vertices and if $\kappa_{1}$ were odd, then the valency of any of the two vertices would be odd, since $\kappa_{1}$ is the number of simple undirected edges between the two vertices.  But this is a contradiction.  Thus, from (\ref{starting_point}) with $n = 1$, we get
$${\rm ord}_{\mathcal{L}_{1}}(h_{X}(1,\chi_{1})) \ge 2. $$
By the same token as above, we deduce our result using (\ref{extra_two_2}) instead of (\ref{extra_two}).
\end{proof}

\subsection{Cayley-Serre multigraphs} \label{cayley_serre}
The Galois covers of bouquets are precisely the Cayley-Serre multigraphs, so we remind the reader about them here.  Let $G$ be a finite group, $S$ a finite set, and
$$i:S\longrightarrow G $$
a function.  Then, the Cayley-Serre multigraph $X(G,S,i)$ associated to the data $(G,S,i)$ is defined as follows.  The set of vertices is $\{v_{\sigma} \, | \, \sigma \in G \}$.  The set of edges is $\{ (\sigma,s) \, | \, \sigma \in G \text{ and } s \in S\}$.  The edge $(\sigma,s)$ connects the vertex $v_{\sigma}$ to $v_{\sigma \cdot i(s)}$.  The resulting multigraph is a directed multigraph, but we forget about the directions and we obtain a multigraph, denoted by $X(G,S,i)$, which is connected if and only if $i(S)$ generates $G$.

A convenient way to construct regular abelian $\ell$-covers of bouquets is as follows.  Let $\mathbb{Z}_{\ell}$ denote the ring of $\ell$-adic integers.  Let $S$ be a finite set and let
$$i:S \longrightarrow \mathbb{Z}_{\ell} $$
be a function.  Then for each $n \ge 1$, we consider the function 
$$i_{n}:S \longrightarrow \mathbb{Z}/\ell^{n} \mathbb{Z}$$ 
obtained from the composition
$$S  \stackrel{i}{\longrightarrow} \mathbb{Z}_{\ell} \longrightarrow \mathbb{Z}_{\ell}/\ell^{n} \mathbb{Z}_{\ell} \stackrel{\simeq}{\longrightarrow} \mathbb{Z}/\ell^{n}\mathbb{Z}.  $$
If we assume that there is at least one element $s \in S$ such that $s \in \mathbb{Z}_{\ell}^{\times}$, then all Cayley-Serre multigraphs are connected, and we get a regular abelian $\ell$-cover of $B_{|S|}$:
$$X = B_{|S|} \longleftarrow X(\mathbb{Z}/\ell \mathbb{Z},S,i_{1}) \longleftarrow X(\mathbb{Z}/\ell^{2}\mathbb{Z},S,i_{2}) \longleftarrow \ldots \longleftarrow X(\mathbb{Z}/\ell^{n}\mathbb{Z},S,i_{n}) \longleftarrow \ldots$$

\subsection{The spectra of Cayley-Serre multigraphs} \label{spectra_cs}
The Cayley-Serre multigraphs are particularly pleasant, since their spectra can be found explicitly.  From now on, we assume that $G$ is a finite abelian group with an additive binary operation.  We let $L(G)$ be the set of complex-valued functions on $G$.  It is well-known that $L(G)$ is finite dimensional as a $\mathbb{C}$-vector space and that $\widehat{G}$ forms a basis for $L(G)$.  Another basis of $L(G)$ is given by the functions $\chi_{\sigma}$, where 
\begin{equation*}
\chi_{\sigma}(\tau) = 
\begin{cases}
1, &\text{ if } \sigma = \tau;\\
0, &\text{ otherwise}.
\end{cases}
\end{equation*}
Given the data $(G,S,i)$ defining a Cayley-Serre multigraph, we define $\phi:L(G) \longrightarrow L(G)$ via $f \mapsto \phi(f)$, where
$$\phi(f)(\sigma) = \sum_{s \in S}\Big(f(\sigma + i(s)) +  f(\sigma - i(s))\Big). $$
It is simple to check that $\phi$ is a $\mathbb{C}$-linear map.  Furthermore, the matrix of $\phi$ with respect to the basis $\{\chi_{\sigma} \, | \, \sigma \in G\}$ is the adjacency matrix of $X(G,S,i)$.  In other words, one can see the adjacency matrix of $X(G,S,i)$ as an operator on $L(G)$.  Now, for $\chi \in \widehat{G}$, we have
\begin{equation*}
\begin{aligned}
\phi(\chi)(\sigma) &= \sum_{s \in S}\Big(\chi(\sigma + i(s)) +  \chi(\sigma - i(s))\Big) \\
&= \sum_{s \in S}\Big(\chi(\sigma)\chi(i(s)) + \chi(\sigma)\overline{\chi}(i(s)) \Big) \\
&= \left(\sum_{s \in S}\Big(\chi(i(s)) + \overline{\chi}(i(s))\Big) \right) \cdot \chi(\sigma).
\end{aligned}
\end{equation*}
Thus, $\chi$ is an eigenvector in $L(G)$ with eigenvalue
\begin{equation} \label{eigenvalues}
\sum_{s \in S}\Big(\chi(i(s)) + \overline{\chi}(i(s))\Big).
\end{equation}
It follows that the eigenvalues in ${\rm Spec}(X(G,S,i))$ are precisely the expressions (\ref{eigenvalues}), where $\chi$ runs over all characters of $G$.  A Cayley-Serre multigraph $X(G,S,i)$ for a finite abelian group $G$ is an abelian cover of $B_{|S|}$ and we can now show the following result.
\begin{theorem} \label{match_eigenvalues}
Let $G$ be a finite abelian group and let $(G,S,i)$ be some data defining a connected Cayley-Serre multigraph $X(G,S,i)$.  If $\chi \in \widehat{G}$, then
$$A_{\chi} = \sum_{s \in S}\Big(\chi(i(s)) + \overline{\chi}(i(s))\Big), $$
where $A_{\chi}$ was defined in Definition \ref{artinian}.
\end{theorem}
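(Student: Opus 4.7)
The plan is to unwind both Definition \ref{artinian} and the Cayley--Serre construction, then combine them with a simple character reindexing. Since the base multigraph $X = B_{|S|}$ is a bouquet with a single vertex, $g_X = 1$, so each matrix $A(\sigma)$ collapses to a single integer $a_{11}(\sigma)$, and $A_\chi = \sum_{\sigma \in G}\chi(\sigma)\,a_{11}(\sigma)$ is a scalar. I would choose the base point $w_1 = v_0$ (the vertex attached to the identity of $G$) in the fiber over the unique vertex of $X$; by Galois equivariance, $w_1^\sigma = v_\sigma$.

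The main computation is then to determine $a_{11}(\sigma)$ directly from the data $(G,S,i)$. An edge $(\tau,s)$ of $X(G,S,i)$ joins $v_\tau$ to $v_{\tau + i(s)}$, so the undirected edges incident to $v_0$ reaching $v_\sigma$ come either from $s \in S$ with $i(s) = \sigma$ (the edge $(0,s)$ leaving $v_0$) or from $s \in S$ with $i(s) = -\sigma$ (the edge $(\sigma,s)$ terminating at $v_0$). For $\sigma \neq 0$ these two sources are distinct, yielding
$$a_{11}(\sigma) = |\{s \in S : i(s) = \sigma\}| + |\{s \in S : i(s) = -\sigma\}|,$$
whereas for $\sigma = 0$ every such $s$ gives an undirected loop at $v_0$, and Definition \ref{artinian} instructs us to count loops with multiplicity two, so $a_{11}(0) = 2\,|\{s \in S : i(s) = 0\}|$. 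This diagonal factor of two is exactly what is needed to uniformize the two cases.

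Finally, I would substitute into $A_\chi = \sum_\sigma \chi(\sigma)\,a_{11}(\sigma)$, write the loop contribution as $|\{s : i(s)=0\}|(\chi(0)+\overline{\chi}(0))$, and reindex the second piece $\sum_{\sigma \neq 0}\chi(\sigma)|\{s : i(s) = -\sigma\}|$ via $\sigma \mapsto -\sigma$, using $\chi(-\sigma) = \overline{\chi}(\sigma)$. Collecting terms gives
$$A_\chi = \sum_{\sigma \in G}|\{s \in S : i(s) = \sigma\}|\bigl(\chi(\sigma)+\overline{\chi}(\sigma)\bigr) = \sum_{s \in S}\bigl(\chi(i(s)) + \overline{\chi}(i(s))\bigr),$$
which is the claimed identity.

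I do not expect any conceptual obstacle. The only mildly delicate point is the bookkeeping around loops versus non-loops: one must check that the factor of two built into Definition \ref{artinian} exactly matches the doubling caused by the collapse of the two sources of edges when $\sigma = 0$, so that the two cases merge into a single clean sum indexed by $S$. Once this is verified, the remainder is a one-line reindexing.
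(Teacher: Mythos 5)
Your proposal is correct and follows essentially the same route as the paper: both reduce to the $1\times 1$ matrix $a(\sigma)$, identify it as the count of $s\in S$ with $i(s)=\pm\sigma$ (the loop factor of two in Definition \ref{artinian} matching the coincidence of the two sources at $\sigma=0$), and then reindex the sum over $G$ into a sum over $S$ using $\chi(-\sigma)=\overline{\chi}(\sigma)$. The only cosmetic difference is that the paper writes $a(\sigma)=b(\sigma)+c(\sigma)$ uniformly in $\sigma$ rather than splitting off the loop case explicitly.
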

\begin{proof}
First, we note that the matrix $A(\sigma) = (a(\sigma))$ of Definition \ref{artinian} is a one by one matrix and it satisfies
$$a(\sigma) = b(\sigma) + c(\sigma), $$
where 
$$b(\sigma) = |\{s \in S \, | \, \sigma = i(s) \}| \text{ and } c(\sigma) = |\{s \in S \, | \, \sigma = -i(s)\}|.$$
Now,
$$\sum_{s \in S}\chi(i(s)) = \sum_{\sigma \in G} \sum_{\substack{s \in S \\ \sigma = i(s)}}\chi(\sigma) = \sum_{\sigma \in G} \chi(\sigma) b(\sigma), $$
and similarly
$$\sum_{s \in S}\overline{\chi}(i(s)) = \sum_{\sigma \in G} \sum_{\substack{s \in S \\ \sigma = -i(s)}}\chi(\sigma) = \sum_{\sigma \in G} \chi(\sigma) c(\sigma). $$
Therefore,
$$A_{\chi} = \sum_{\sigma \in G}\chi(\sigma)a(\sigma) =   \sum_{s \in S}\Big(\chi(i(s)) + \overline{\chi}(i(s))\Big),$$
and this is what we wanted to show.
\end{proof}

If we denote by $\lambda_{\chi}$ the eigenvalue (\ref{eigenvalues}) corresponding to $\chi \in \widehat{G}$, and $|S| = (q+1)/2$, then we have
\begin{equation} \label{link_alg}
\begin{aligned}
1 - \lambda_{\chi} + q &= 2|S| - \lambda_{\chi} \\
&= 2|S| - \sum_{s \in S} (\chi(i(s)) + \overline{\chi}(i(s))) \\
&= \sum_{s \in S} \left(1 - \chi(i(s)) + (1 - \overline{\chi}(i(s))) \right) \\
&= \sum_{s \in S}(1- \chi(i(s)))(1 - \overline{\chi}(i(s))),
\end{aligned}
\end{equation}
so we are led to study the algebraic integers of the form
$$(1-\zeta_{m}^{a})(1 - \zeta_{m}^{-a}), $$
where $m$ is an integer greater than or equal to one, and $a$ is any integer.

\subsection{A few lemmas about some cyclotomic algebraic integers} \label{cyclo_integer}
Let $m \ge 1$ be an integer, and let $a$ be any integer.  We define
$$\varepsilon_{m}(a) = (1 - \zeta_{m}^{a})(1-\zeta_{m}^{-a}), $$
where
$$\zeta_{m} = \exp\left(\frac{2 \pi i}{m} \right). $$
We write simply $\varepsilon_{m}$ rather than $\varepsilon_{m}(1)$.
\begin{lemma} \label{useful_form}
For any integer $a \ge 1$ and any $m \ge 2$, we have
$$\frac{\varepsilon_{m}(a)}{\varepsilon_{m}} = a^{2} - (a-1) \varepsilon_{m} - (a-2) \varepsilon_{m}(2) - \ldots - \varepsilon_{m}(a-1). $$
\end{lemma}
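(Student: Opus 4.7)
The plan is to prove the identity by a direct algebraic expansion rather than by induction on $a$. The key observation is that $\varepsilon_m(a)/\varepsilon_m$ factors as a product of two geometric sums, and regrouping the resulting double sum along diagonals produces exactly the right combination of $\varepsilon_m(d)$'s.

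First I would apply the polynomial factorizations $1 - \zeta_m^a = (1-\zeta_m)\sum_{j=0}^{a-1}\zeta_m^j$ and $1 - \zeta_m^{-a} = (1-\zeta_m^{-1})\sum_{k=0}^{a-1}\zeta_m^{-k}$. Dividing the product $(1-\zeta_m^a)(1-\zeta_m^{-a})$ by $\varepsilon_m = (1-\zeta_m)(1-\zeta_m^{-1})$ yields
$$\frac{\varepsilon_m(a)}{\varepsilon_m} = \sum_{j=0}^{a-1}\sum_{k=0}^{a-1}\zeta_m^{j-k}.$$
Next, I would regroup the terms on the right by the value of $d := j-k$. The number of pairs $(j,k) \in \{0,\dots,a-1\}^2$ with $j-k = d$ equals $a-|d|$ for every $d$ with $|d|\le a-1$. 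Pairing each $d \ge 1$ with $-d$ and isolating $d=0$ gives
$$\frac{\varepsilon_m(a)}{\varepsilon_m} = a + \sum_{d=1}^{a-1}(a-d)\left(\zeta_m^d + \zeta_m^{-d}\right).$$
Finally, I would substitute $\zeta_m^d + \zeta_m^{-d} = 2 - \varepsilon_m(d)$ and use $\sum_{d=1}^{a-1}(a-d) = a(a-1)/2$, so that the constant contribution becomes $a + a(a-1) = a^2$, delivering the asserted formula.

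No step presents a genuine obstacle: each move is formal, and the one point meriting care is the possibility that $m \mid a$, in which case $1-\zeta_m^a = 0$. Since the factorization we use is a polynomial identity in $\zeta_m$ rather than a cancellation of nonzero terms, the argument goes through uniformly for all $a \ge 1$ and $m \ge 2$.
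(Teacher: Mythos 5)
Your proposal is correct and follows essentially the same route as the paper: both expand $\varepsilon_m(a)/\varepsilon_m$ as the double geometric sum $\sum_{j,k=0}^{a-1}\zeta_m^{j-k}$, group the off-diagonal terms in conjugate pairs (your indexing by $d=j-k$ versus the paper's sum over $i<j$ is only cosmetic), and use $\zeta_m^d+\zeta_m^{-d}=2-\varepsilon_m(d)$ to extract the constant $a^2$ and the coefficients $a-d$. Your remark about the case $m\mid a$ is a harmless bonus; nothing further is needed.
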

\begin{proof}
We calculate
\begin{equation*}
\begin{aligned}
\frac{\varepsilon_{m}(a)}{\varepsilon_{m}} &= \frac{(1- \zeta_{m}^{a})(1- \bar{\zeta}_{m}^{a})}{(1-\zeta_{m})(1 - \bar{\zeta}_{m})} \\
&= (1 + \zeta_{m} + \zeta_{m}^{2} + \ldots + \zeta_{m}^{a-1})(1+\bar{\zeta}_{m} + \bar{\zeta}_{m}^{2} + \ldots + \bar{\zeta}_{m}^{a-1}) \\
&= \sum_{i=0}^{a-1} \sum_{j=0}^{a-1} \zeta_{m}^{i - j} \\
&= a + \sum_{0 \le i < j \le a-1}(\zeta_{m}^{i-j} + \zeta_{m}^{j-i}) \\
&= a^{2} + \sum_{0 \le i < j \le a-1} (\zeta_{m}^{j-i} - 1)(1 - \zeta_{m}^{i-j}) \\
&= a^{2} - (a-1) \varepsilon_{m} - (a-2) \varepsilon_{m}(2) - \ldots - \varepsilon_{m}(a-1).
\end{aligned}
\end{equation*}
\end{proof}
We clearly have
$$\varepsilon_{1}(a) = 0 \text{ and } \varepsilon_{m}(0) = 0$$
for all $a \in \mathbb{Z}$ and all $m \ge 1$.  Also
$$\varepsilon_{m}(a) = \varepsilon_{m}(-a),$$
for all $m \ge 1$ and all $a \in \mathbb{Z}$.  For us, $m$ will always be a power of a prime, so we let $m = \ell^{n}$, where $\ell$ is a rational prime and $n$ is a non-negative integer.  Note that $\varepsilon_{\ell^{n}}(a) \in \mathbb{Z}[\zeta_{\ell^{n}}]^{+}$ for all $a \in \mathbb{Z}$.  Furthermore, from (\ref{unique_prime}), we have
\begin{equation} \label{valuation}
\begin{aligned}
{\rm ord}_{\mathcal{L}_{n}}(\varepsilon_{\ell^{n}}) &= {\rm ord}_{\mathcal{L}_{n}}(\mathcal{L}_{n}^{2})\\
&=2,
\end{aligned}
\end{equation}
provided $n \ge 1$.  We record in the next lemma a few simple properties satisfied by these algebraic integers.
\begin{lemma} \label{valua}
With the notation as above:
\begin{enumerate}
\item For all $a \ge 1$, ${\rm ord}_{\mathcal{L}_{n}}(\varepsilon_{\ell^{n}}(a)) \ge 2$, \label{un}
\item If $(a,\ell) = 1$, then ${\rm ord}_{\mathcal{L}_{n}}(\varepsilon_{\ell^{n}}(a)) = 2$, \label{deux}
\item If $a = \ell^{s}b$ with $(b,\ell)=1$ and $s < n$, then ${\rm ord}_{\mathcal{L}_{n}}(\varepsilon_{\ell^{n}}(a)) = 2\ell^{s}$, \label{trois}
\item If $\ell^{n} \, | \, a$, then ${\rm ord}_{\mathcal{L}_{n}}(\varepsilon_{\ell^{n}}(a)) = \infty$. \label{quatre}
\end{enumerate}
\end{lemma}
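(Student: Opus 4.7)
My plan is to treat the four claims in the order (\ref{quatre}), (\ref{deux}), (\ref{trois}), (\ref{un}), since (\ref{un}) will follow as a direct corollary of the other three. Part (\ref{quatre}) is immediate: if $\ell^{n}\mid a$ then $\zeta_{\ell^{n}}^{a}=1$, so $\varepsilon_{\ell^{n}}(a)=0$, and by the valuation axiom ${\rm ord}_{\mathcal{L}_{n}}(0)=\infty$.

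For (\ref{deux}) the idea is pure Galois symmetry. When $(a,\ell)=1$, the element $\zeta_{\ell^{n}}^{a}$ is again a primitive $\ell^{n}$-th root of unity, so some $\sigma\in{\rm Gal}(\mathbb{Q}(\zeta_{\ell^{n}})/\mathbb{Q})$ sends $\zeta_{\ell^{n}}$ to $\zeta_{\ell^{n}}^{a}$. Because $\mathcal{L}_{n}$ is the unique prime of $\mathbb{Q}(\zeta_{\ell^{n}})$ above $\ell$, it is fixed setwise by every such $\sigma$, and therefore ${\rm ord}_{\mathcal{L}_{n}}$ is $\sigma$-invariant. Combined with (\ref{unique_prime}), this gives ${\rm ord}_{\mathcal{L}_{n}}(1-\zeta_{\ell^{n}}^{a})={\rm ord}_{\mathcal{L}_{n}}(1-\zeta_{\ell^{n}})=1$; the same argument applies to the complex-conjugate factor, and multiplying yields ${\rm ord}_{\mathcal{L}_{n}}(\varepsilon_{\ell^{n}}(a))=2$.

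For (\ref{trois}), I would write $\zeta_{\ell^{n}}^{a}=\zeta_{\ell^{n}}^{\ell^{s}b}=\zeta_{\ell^{n-s}}^{b}$, which is a primitive $\ell^{n-s}$-th root of unity since $(b,\ell)=1$. Thus $1-\zeta_{\ell^{n}}^{a}$ actually lies in the subfield $\mathbb{Q}(\zeta_{\ell^{n-s}})$, and the Galois argument above, applied at level $n-s$, shows ${\rm ord}_{\mathcal{L}_{n-s}}(1-\zeta_{\ell^{n}}^{a})=1$. I would then invoke the ramification identity (\ref{ramif}) for the extension $\mathbb{Q}(\zeta_{\ell^{n}})/\mathbb{Q}(\zeta_{\ell^{n-s}})$. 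By multiplicativity of ramification indices in the tower $\mathbb{Q}\subseteq\mathbb{Q}(\zeta_{\ell^{n-s}})\subseteq\mathbb{Q}(\zeta_{\ell^{n}})$, combined with the fact that $\ell$ is totally ramified at every stage, the ramification index of $\mathcal{L}_{n}$ over $\mathcal{L}_{n-s}$ equals $\varphi(\ell^{n})/\varphi(\ell^{n-s})=\ell^{s}$. Hence ${\rm ord}_{\mathcal{L}_{n}}(1-\zeta_{\ell^{n}}^{a})=\ell^{s}$, and doubling for the conjugate factor gives ${\rm ord}_{\mathcal{L}_{n}}(\varepsilon_{\ell^{n}}(a))=2\ell^{s}$.

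Finally, (\ref{un}) falls out by a case analysis on $a\ge 1$: either $(a,\ell)=1$ (valuation $2$ by (\ref{deux})), or $a=\ell^{s}b$ with $1\le s<n$ and $(b,\ell)=1$ (valuation $2\ell^{s}\ge 2$ by (\ref{trois})), or $\ell^{n}\mid a$ (valuation $\infty$ by (\ref{quatre})). No step is genuinely difficult; the only point requiring care is pinning down the ramification index $e=\ell^{s}$ in (\ref{trois}) and noting that (\ref{ramif}) applies precisely because $1-\zeta_{\ell^{n}}^{a}$ lives in the smaller field $\mathbb{Q}(\zeta_{\ell^{n-s}})$.
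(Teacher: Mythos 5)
Your proof is correct, but it reverses the logical order of the paper's argument and replaces its key mechanism for parts (\ref{un}) and (\ref{deux}). The paper proves (\ref{un}) first, directly from Lemma \ref{useful_form}: the identity $\varepsilon_{\ell^{n}}(a)/\varepsilon_{\ell^{n}} = a^{2} - (a-1)\varepsilon_{\ell^{n}} - \cdots - \varepsilon_{\ell^{n}}(a-1)$ exhibits the quotient as an algebraic integer, so ${\rm ord}_{\mathcal{L}_{n}}(\varepsilon_{\ell^{n}}(a)) \ge {\rm ord}_{\mathcal{L}_{n}}(\varepsilon_{\ell^{n}}) = 2$ by (\ref{valuation}); it then gets (\ref{deux}) from the same identity via the ultrametric equality (\ref{equality}), since ${\rm ord}_{\mathcal{L}_{n}}(a^{2})=0$ while every other summand has valuation at least $2$. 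You instead prove (\ref{deux}) by Galois symmetry --- $\sigma:\zeta_{\ell^{n}}\mapsto\zeta_{\ell^{n}}^{a}$ fixes the unique prime $\mathcal{L}_{n}$ setwise, so each factor $1-\zeta_{\ell^{n}}^{\pm a}$ has valuation $1$ --- and then deduce (\ref{un}) as a corollary of (\ref{deux}), (\ref{trois}), (\ref{quatre}) by case analysis on $a$. Both arguments are sound, and your treatments of (\ref{trois}) (descent to $\mathbb{Q}(\zeta_{\ell^{n-s}})$ plus multiplicativity of ramification indices, giving $e=\ell^{s}$) and (\ref{quatre}) coincide with the paper's. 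What your route buys is independence from Lemma \ref{useful_form} and the cleaner standard fact that $1-\zeta_{\ell^{n}}^{a}$ generates $\mathcal{L}_{n}$ whenever $(a,\ell)=1$; what the paper's route buys is economy, since Lemma \ref{useful_form} and the associated polynomials $P_{a}$ are needed anyway for Theorem \ref{main}, so the valuation facts come essentially for free from machinery already in place.
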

\begin{proof}
Point ($\ref{un}$) follows from Lemma $\ref{useful_form}$ and ($\ref{valuation}$).  Point ($\ref{deux}$) follows from the first claim and from the equality ($\ref{equality}$).  Point ($\ref{trois}$) follows from the transitivity property of the ramification indices and from point ($\ref{deux}$).  Point ($\ref{quatre}$) is clear, since $\varepsilon_{\ell^{n}}(a) = 0$ if $\ell^{n} \, | \, a$.
\end{proof}
We now define some polynomials with integer coefficients recursively as follows.  For $a=0,1$, we let
$$P_{0}(T) = 0 \text{ and } P_{1}(T) = T.$$
If $a \ge 2$, then we set 
$$P_{a}(T) = T(a^{2} - (a-1)P_{1}(T) - (a-2)P_{2}(T) - \ldots - P_{a-1}(T)).$$
We record here a few simple properties of these polynomials.
\begin{lemma} \label{pol_prop}
With the notation as above, for all $a \ge 0$:
\begin{enumerate}
\item We have $P_{a}(0) = 0$,
\item If $a \ge 1$, the degree of $P_{a}(T)$ is $a$,
\item The coefficient of $T$ is $a^{2}$,
\item The leading coefficients of $P_{a}(T)$ is $(-1)^{a+1}$,
\item For all $m \ge 1$, $P_{a}(\varepsilon_{m}) = \varepsilon_{m}(a) = \varepsilon_{m}(-a)$.
\end{enumerate}
\end{lemma}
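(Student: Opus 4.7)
The plan is to prove properties (1)--(4) by a direct induction on $a$ using the recursive definition of $P_a(T)$, and then to deduce (5) by combining the inductive structure of the recursion with Lemma \ref{useful_form}.

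For (1), observe that $P_0(T) = 0$ has no constant term, and for $a \ge 1$ the recursion expresses $P_a(T)$ as $T$ times a polynomial in $T$, so $P_a(0) = 0$ automatically. Properties (2), (3), and (4) proceed by a single induction on $a$. The base cases $a = 0, 1$ are immediate from $P_0 = 0$ and $P_1(T) = T$. For the inductive step, suppose the three claims hold for $P_1, \ldots, P_{a-1}$. Inside the parentheses in the recursion, the term $-P_{a-1}(T)$ has the strictly largest degree, namely $a-1$, so $\deg P_a = a$, giving (2). The constant term inside the parentheses equals $a^2$ since each $P_k(0) = 0$ by (1), yielding (3). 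The leading coefficient of $-P_{a-1}(T)$ is $-(-1)^{a} = (-1)^{a+1}$ by induction, and multiplication by the outer $T$ does not change this coefficient, proving (4).

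Property (5) is the essential content of the lemma. I would proceed by strong induction on $a$. For $a = 0$ and $a = 1$ the equality $P_a(\varepsilon_m) = \varepsilon_m(a)$ is immediate. For $a \ge 2$, assuming $P_k(\varepsilon_m) = \varepsilon_m(k)$ for all $0 \le k \le a-1$, Lemma \ref{useful_form} gives
\begin{equation*}
\frac{\varepsilon_m(a)}{\varepsilon_m} = a^2 - (a-1)\varepsilon_m - (a-2)\varepsilon_m(2) - \ldots - \varepsilon_m(a-1),
\end{equation*}
and substituting $\varepsilon_m(k) = P_k(\varepsilon_m)$ on the right turns this into exactly the recursion defining $P_a$, so that $\varepsilon_m(a) = \varepsilon_m \cdot \bigl(a^2 - \sum_{k=1}^{a-1}(a-k) P_k(\varepsilon_m)\bigr) = P_a(\varepsilon_m)$. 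The identity $\varepsilon_m(a) = \varepsilon_m(-a)$ is just the observation $(1-\zeta_m^a)(1-\zeta_m^{-a}) = (1-\zeta_m^{-a})(1-\zeta_m^{a})$, already recorded in the text immediately before Lemma \ref{valua}.

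The only mild subtlety is that Lemma \ref{useful_form} requires $m \ge 2$, but the case $m = 1$ poses no difficulty since $\varepsilon_1(a) = 0$ for all $a$ and $P_a(0) = 0$ by (1). No step is really an obstacle: the entire proof is bookkeeping built on top of Lemma \ref{useful_form}, which itself encodes the key telescoping identity.
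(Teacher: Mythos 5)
Your proof is correct and follows exactly the route the paper intends: the paper's own proof simply states that all properties follow by induction and leaves the details to the reader, and your write-up supplies precisely those details, with property (5) obtained by feeding Lemma \ref{useful_form} into the recursion defining $P_a$. The handling of the edge case $m=1$ via $\varepsilon_1 = 0$ and $P_a(0)=0$ is a nice touch that the paper glosses over.
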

\begin{proof}
All these properties follow by induction, and the details are left to the reader.
\end{proof}
The first few polynomials are
\begin{equation*}
\begin{aligned}
P_{1}(T) &= T \\
P_{2}(T) &= -T^{2} + 4T \\
P_{3}(T) &= T^{3} -6T^{2} + 9T \\
P_{4}(T) &= - T^{4} + 8T^{3} - 20T^{2} + 16T\\
P_{5}(T) &= T^{5} - 10T^{4} + 35T^{3} - 50T^{2} + 25T.
\end{aligned}
\end{equation*}

\subsection{A particular family of regular abelian $\ell$-towers of bouquets} \label{main_1}
We can now show the following theorem.
\begin{theorem} \label{main}
Let $q$ be an odd positive integer satisfying $q \ge 2$.  Let also $S =\{s_{1},\ldots,s_{t} \}$ be a finite set of integers with cardinality $t = (q+1)/2$, and let $i:S \longrightarrow \mathbb{Z}$ be a function.  For $j=1,\ldots, t$, let $a_{j}$ be the integer satisfying $i(s_{j})=a_{j}$.  Assume that at least one of $a_{1},\ldots,a_{t}$ is relatively prime with $\ell$ (so that our Cayley-Serre multigraphs are connected) and set $b_{j} = |a_{j}|$.  Consider the regular abelian $\ell$-cover
$$X = B_{t} \longleftarrow X(\mathbb{Z}/\ell \mathbb{Z},S,i_{1}) \longleftarrow X(\mathbb{Z}/\ell^{2}\mathbb{Z},S,i_{2}) \longleftarrow \ldots \longleftarrow X(\mathbb{Z}/\ell^{n}\mathbb{Z},S,i_{n}) \longleftarrow \ldots$$
and define the integers $c_{j}$ via 
\begin{equation*}
\begin{aligned}
Q(T) &= P_{b_{1}}(T) + \ldots + P_{b_{t}}(T) \\
&= c_{1}T + \ldots + c_{a}T^{a}.
\end{aligned}
\end{equation*}
Let
$$\mu_{\ell} = {\rm min}\{{\rm ord}_{\ell}(c_{j})\, | \, j =1,\ldots,a\}, $$
and 
$$\lambda_{\ell} = {\rm min}\{2j \, | \, j = 1,\ldots, a \text{ and } {\rm ord}_{\ell}(c_{j}) = \mu\}  -1. $$
Then, there exists $n_{0} \ge 1$ and a constant $\nu_{\ell}$ (depending also on the $a_{j}$) such that
$${\rm ord}_{\ell}(\kappa_{n}) = \mu_{\ell} \cdot \ell^{n} + \lambda_{\ell} \cdot n + \nu_{\ell},$$
when $n \ge n_{0}$. 
\end{theorem}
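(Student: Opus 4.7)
The plan is to reduce to a per-layer computation via equation (\ref{starting_point}), identify $h_X(1,\chi_i)$ with the polynomial $Q$ evaluated at $\varepsilon_{\ell^i}$, and then read off the $\mathcal{L}_i$-valuation by a dominant-term argument.

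The first step is to rewrite $h_X(1,\chi_i)$ algebraically for a faithful character $\chi_i$ of $G_i = \mathbb{Z}/\ell^i\mathbb{Z}$. Combining Theorem \ref{match_eigenvalues} (which computes $A_{\chi_i}$) with (\ref{simple_for}) and (\ref{link_alg}), I would write
$$h_X(1,\chi_i) = \sum_{j=1}^{t}(1-\chi_i(a_j))(1-\overline{\chi_i(a_j)}) = \sum_{j=1}^{t}\varepsilon_{\ell^i}(a_j).$$
Lemma \ref{pol_prop}(5) together with the symmetry $\varepsilon_m(-a)=\varepsilon_m(a)$ then converts the right-hand side into $Q(\varepsilon_{\ell^i}) = \sum_{j=1}^{a}c_j\varepsilon_{\ell^i}^{j}$, bringing the polynomial $Q$ of the statement into the picture.

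The second step is to evaluate the $\mathcal{L}_i$-valuation of each monomial $c_j\varepsilon_{\ell^i}^j$. Using (\ref{valuation}) I have ${\rm ord}_{\mathcal{L}_i}(\varepsilon_{\ell^i}) = 2$, and using (\ref{ramif}) together with the fact that $\ell$ is totally ramified in $\mathbb{Q}(\zeta_{\ell^i})$ with ramification index $\varphi(\ell^i)$, I have ${\rm ord}_{\mathcal{L}_i}(c_j) = \varphi(\ell^i)\cdot{\rm ord}_{\ell}(c_j)$. Hence
$${\rm ord}_{\mathcal{L}_i}\!\left(c_j\varepsilon_{\ell^i}^j\right) = \varphi(\ell^i)\,{\rm ord}_{\ell}(c_j) + 2j.$$

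The crux is then to show that for $i$ large, this quantity is minimized by a \emph{unique} index $j_0$, so that the sharp triangle equality (\ref{equality}) applies. Let $j_0$ be the smallest index with ${\rm ord}_{\ell}(c_{j_0}) = \mu_\ell$, so $2j_0-1 = \lambda_\ell$. For any other $j$ with ${\rm ord}_{\ell}(c_j) > \mu_\ell$, the gap $\varphi(\ell^i)({\rm ord}_{\ell}(c_j)-\mu_\ell)$ grows without bound while the difference $2(j_0-j)$ is bounded; for $j$ with ${\rm ord}_{\ell}(c_j) = \mu_\ell$ but $j > j_0$, uniqueness is immediate. Since there are only finitely many indices, an $n_0$ exists such that for all $i\ge n_0$ the minimum is uniquely achieved at $j_0$, giving
$${\rm ord}_{\mathcal{L}_i}(h_X(1,\chi_i)) = (\ell-1)\ell^{i-1}\mu_\ell + \lambda_\ell + 1.$$
This uniqueness argument is the only genuinely non-routine step; the rest is bookkeeping.

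The last step is to assemble the answer via (\ref{starting_point}). Summing over $i$ from $n_0$ to $n$ and using the geometric identity $\sum_{i=n_0}^n(\ell-1)\ell^{i-1} = \ell^n - \ell^{n_0-1}$, I obtain
$$\sum_{i=n_0}^{n}{\rm ord}_{\mathcal{L}_i}(h_X(1,\chi_i)) = \mu_\ell\ell^n + (\lambda_\ell+1)n + C_1,$$
for a constant $C_1$ depending on $n_0$, $\lambda_\ell$, $\mu_\ell$. Adding the bounded contribution from the layers $1 \le i < n_0$ into a single constant $C_2$ and subtracting $n$ yields
$${\rm ord}_{\ell}(\kappa_n) = \mu_\ell\ell^n + \lambda_\ell n + \nu_\ell$$
with $\nu_\ell = C_1 + C_2$, valid for all $n \ge n_0$. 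Note that this route uses (\ref{starting_point}) directly, so it applies uniformly to both $\ell=2$ and odd $\ell$ without invoking (\ref{extra_two}) or (\ref{extra_two_2}).
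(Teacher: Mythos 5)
Your proposal is correct and follows essentially the same route as the paper's proof: reduce to the faithful characters via (\ref{starting_point}), rewrite $h_X(1,\chi_i)$ as $Q(\varepsilon_{\ell^i})=\sum_j c_j\varepsilon_{\ell^i}^j$ using Lemma \ref{pol_prop}, determine ${\rm ord}_{\mathcal{L}_i}$ of that sum for large $i$, and telescope. The only difference is that you spell out the dominant-term step (the valuation $\varphi(\ell^i)\,{\rm ord}_\ell(c_j)+2j$ of each monomial and the uniqueness of the minimizing index $j_0$ for $i\gg 0$, which licenses the sharp equality (\ref{equality})), which the paper compresses into the single sentence ``If $i$ is large enough, then we get\dots''.
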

\begin{proof}
Let $\chi_{i} \in \widehat{G}_{i}$ be the faithful character defined via $\chi_{i}(\bar{1}) = \zeta_{\ell^{i}}$.  Then, putting (\ref{simple_for}), (\ref{starting_point}), Theorem \ref{match_eigenvalues}, and  (\ref{link_alg}) together, we obtain
$${\rm ord}_{\ell}(\kappa_{n}) = -n + \sum_{i=1}^{n} {\rm ord}_{\mathcal{L}_{i}}(\varepsilon_{\ell^{i}}(a_{1}) + \ldots +  \varepsilon_{\ell^{i}}(a_{t})). $$
Now, by Lemma \ref{pol_prop}, we have
\begin{equation*}
\begin{aligned}
\varepsilon_{\ell^{i}}(a_{1}) + \ldots +  \varepsilon_{\ell^{i}}(a_{t}) &= \varepsilon_{\ell^{i}}(b_{1}) + \ldots +  \varepsilon_{\ell^{i}}(b_{t}) \\
&= P_{b_{1}}(\varepsilon_{\ell^{i}}) + \ldots + P_{b_{t}}(\varepsilon_{\ell^{i}}) \\
&= Q(\varepsilon_{\ell^{i}}) \\
&= c_{1}\varepsilon_{\ell^{i}} + \ldots + c_{a}(\varepsilon_{\ell^{i}})^{a}.
\end{aligned}
\end{equation*}
If $i$ is large enough, then we get
$${\rm ord}_{\mathcal{L}_{i}}(c_{1}\varepsilon_{\ell^{i}} + \ldots + c_{a}(\varepsilon_{\ell^{i}})^{a}) = \mu_{\ell} \cdot \varphi(\ell^{i}) + (\lambda_{\ell} + 1).$$
Therefore, there exists $n_{0} \ge 1$ such that if $n \ge n_{0}$, then 
$$ {\rm ord}_{\ell}(\kappa_{n}) = -n + C + \sum_{i=n_{0}}^{n} (\mu_{\ell} \cdot \varphi(\ell^{i}) + (\lambda_{\ell} + 1)),$$
for some constant $C$.  But
$$\sum_{i=n_{0}}^{n} (\mu_{\ell} \cdot \varphi(\ell^{i}) + (\lambda_{\ell} + 1)) = (\lambda_{\ell} + 1)(n-n_{0}) + \mu_{\ell}(\ell^{n} - \ell^{n_{0}-1}).$$
The result follows at once.
\end{proof}

\begin{corollary}
With the same notation as in Theorem \ref{main}, if
$$\ell \nmid (a_{1}^{2} + \ldots + a_{t}^{2}), $$
then $\mu_{\ell} = 0$, $\lambda_{\ell} = 1$ and $\nu_{\ell}=0$, so that
$${\rm ord}_{\ell}(\kappa_{n}) = n $$
for all $n \ge 1$.
\end{corollary}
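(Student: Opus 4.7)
The plan is to avoid re-running the asymptotic argument from Theorem \ref{main} and instead compute the $\mathcal{L}_i$-valuation of $Q(\varepsilon_{\ell^i})$ exactly, at every layer $i\ge 1$. The starting point is the identity derived inside the proof of Theorem \ref{main},
$${\rm ord}_{\ell}(\kappa_n) = -n + \sum_{i=1}^{n}{\rm ord}_{\mathcal{L}_i}(Q(\varepsilon_{\ell^i})),$$
together with the expansion $Q(T) = c_1 T + c_2 T^2 + \ldots + c_a T^a$.

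First I would identify $c_1$. By part (3) of Lemma \ref{pol_prop}, the coefficient of $T$ in $P_b(T)$ is $b^2$, so $c_1 = b_1^2 + \ldots + b_t^2 = a_1^2 + \ldots + a_t^2$. Under the hypothesis, this gives ${\rm ord}_\ell(c_1)=0$, which immediately forces $\mu_\ell = 0$ (the minimum of nonnegative integers, achieved at $j=1$) and then $\lambda_\ell = 2\cdot 1 - 1 = 1$ directly from the definitions in Theorem \ref{main}.

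Next I would show that ${\rm ord}_{\mathcal{L}_i}(Q(\varepsilon_{\ell^i})) = 2$ for every $i\ge 1$. By (\ref{valuation}), ${\rm ord}_{\mathcal{L}_i}(\varepsilon_{\ell^i}) = 2$, and since ${\rm ord}_{\mathcal{L}_i}(c_1) = \varphi(\ell^i)\cdot{\rm ord}_\ell(c_1) = 0$ by (\ref{ramif}), we have ${\rm ord}_{\mathcal{L}_i}(c_1 \varepsilon_{\ell^i}) = 2$. For $j\ge 2$, the same relation gives ${\rm ord}_{\mathcal{L}_i}(c_j \varepsilon_{\ell^i}^j) \ge 2j \ge 4$. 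Since the $j=1$ term has strictly smaller valuation than every other term, an iterated application of (\ref{equality}) yields ${\rm ord}_{\mathcal{L}_i}(Q(\varepsilon_{\ell^i})) = 2$.

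Substituting back, ${\rm ord}_\ell(\kappa_n) = -n + \sum_{i=1}^{n} 2 = n$ for every $n\ge 1$. Comparing with the formula $\mu_\ell\cdot \ell^n + \lambda_\ell\cdot n + \nu_\ell = n$ from Theorem \ref{main} (valid for $n\ge n_0$) forces $\nu_\ell = 0$. There is no real obstacle here; the only mild subtlety is that Theorem \ref{main} a priori only yields the asymptotic for $n\ge n_0$, whereas the corollary claims equality at every level $n\ge 1$. That discrepancy is handled automatically by the layer-by-layer computation above, which establishes the exact identity ${\rm ord}_\ell(\kappa_n)=n$ directly from (\ref{equality}) and Lemma \ref{valua}.
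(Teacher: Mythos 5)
Your proposal is correct and follows essentially the same route as the paper: both identify $c_{1}=a_{1}^{2}+\ldots+a_{t}^{2}$, use ${\rm ord}_{\mathcal{L}_{i}}(\varepsilon_{\ell^{i}})=2$ together with the ultrametric equality (\ref{equality}) to get ${\rm ord}_{\mathcal{L}_{i}}(Q(\varepsilon_{\ell^{i}}))=2$ for every $i\ge 1$, and then sum over the layers. Your write-up just spells out the details (including the exactness at every $n\ge 1$, not merely for $n\ge n_{0}$) that the paper's terser proof leaves implicit.
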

\begin{proof}
Indeed, one has
$$Q(T) = T (c_{1} + c_{2}T + \ldots + c_{a}T^{a-1}), $$
and thus by (\ref{equality}) and Lemma \ref{valua}, we get ${\rm ord}_{\mathcal{L}_{i}}(\varepsilon_{\ell^{i}}) = 2$ for all $i \ge 1$, since $c_{1} = a_{1}^{2}+ \ldots + a_{t}^{2}$.
\end{proof}
It follows that the lower bound we found in \S \ref{lower_bound_1} is the best possible one in general.

\begin{corollary}
With the same notation as in Theorem \ref{main}, we have
$$\mu_{\ell} \le \log_{\ell}(2(q+1)). $$
\end{corollary}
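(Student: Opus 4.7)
The plan is to derive this inequality by combining the exact asymptotic formula for ${\rm ord}_{\ell}(\kappa_{n})$ given by Theorem \ref{main} with the general upper bound of Theorem \ref{upper_bound}, and then letting $n \to \infty$.

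First, I would verify that Theorem \ref{upper_bound} applies in the present setting. The base multigraph $X$ is the bouquet $B_{t}$ with $t = (q+1)/2$ loops, which is $(q+1)$-regular with $g_{X} = 1$. Since $q$ is odd and $q \ge 2$, in fact $q \ge 3$, so $\chi(X) = (1-q)/2 \neq 0$, and the regular abelian $\ell$-tower of Theorem \ref{main} falls squarely within the hypotheses of Theorem \ref{upper_bound}. Specializing that bound with $g = g_{X} = 1$ yields
\begin{equation*}
{\rm ord}_{\ell}(\kappa_{n}) \;\le\; \log_{\ell}(2(q+1)) \cdot \ell^{n} \;-\; n \;+\; \log_{\ell}(A),
\end{equation*}
valid for every $n \ge 0$, where $A$ is the constant of Theorem \ref{upper_bound}.

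Next, for $n \ge n_{0}$, Theorem \ref{main} gives the exact expression
\begin{equation*}
{\rm ord}_{\ell}(\kappa_{n}) \;=\; \mu_{\ell} \cdot \ell^{n} + \lambda_{\ell} \cdot n + \nu_{\ell}.
\end{equation*}
Combining with the displayed upper bound, for all sufficiently large $n$ we have
\begin{equation*}
\mu_{\ell} \cdot \ell^{n} + \lambda_{\ell} \cdot n + \nu_{\ell} \;\le\; \log_{\ell}(2(q+1)) \cdot \ell^{n} - n + \log_{\ell}(A).
\end{equation*}

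Finally, I would divide both sides by $\ell^{n}$ and take the limit as $n \to \infty$. The terms $(\lambda_{\ell} n + \nu_{\ell})/\ell^{n}$ and $(-n + \log_{\ell}(A))/\ell^{n}$ both tend to $0$, so the inequality reduces to $\mu_{\ell} \le \log_{\ell}(2(q+1))$, as desired. There is no real obstacle here: the argument is simply a compatibility check between the two main results, with the asymptotic exact formula pinned against the crude exponential upper bound.
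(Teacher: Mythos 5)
Your proof is correct and is exactly the intended argument: the paper's proof consists of the single line ``This follows from Theorem \ref{upper_bound}'', and your write-up is the natural elaboration of that citation (specialize the upper bound to $g=1$, compare with the asymptotic formula of Theorem \ref{main}, divide by $\ell^{n}$, and let $n \to \infty$).
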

\begin{proof}
This follows from Theorem \ref{upper_bound}.
\end{proof}

\subsection{Examples} \label{examples}
By writing down the polynomial $Q$ of Theorem \ref{main}, one can find $n_{0}$ for which
$${\rm ord}_{\ell}(\kappa_{n}) = \mu_{\ell} \cdot \ell^{n} + \lambda_{\ell} \cdot n + \nu_{\ell}, $$
when $n \ge n_{0}$.  Once such an integer $n_{0}$ is known, one can find $\nu_{\ell}$ as well.  

All the multigraphs arising in abelian $\ell$-towers of bouquets are circulant multigraphs and thus are highly symmetrical.  Their vertices can be placed at the $\ell^{i}$-th roots of unity on the unit circle in the complex plane.  Their group $\mathbb{Z}/\ell^{i}\mathbb{Z}$ of symmetries becomes then apparent.

The computation of the number of spanning trees and of the polynomials $Q$ in this section have been performed with the software \cite{SAGE}.

\begin{enumerate}
\item If we take $a_{1} = a_{2} = 1$ and $\ell = 2$.  Then, we get:
\begin{equation*}
\begin{tikzpicture}[baseline={([yshift=-1.7ex] current bounding box.center)}]
\node[draw=none,minimum size=2cm,regular polygon,regular polygon sides=1] (a) {};
\foreach \x in {1}
  \fill (a.corner \x) circle[radius=0.7pt];
\draw (a.corner 1) to [in=50,out=130,loop] (a.corner 1);
\draw (a.corner 1) to [in=50,out=130,distance = 0.5cm,loop] (a.corner 1);
\end{tikzpicture}
\longleftarrow \, \, \,
\begin{tikzpicture}[baseline={([yshift=-0.6ex] current bounding box.center)}]
\node[draw=none,minimum size=2cm,regular polygon,rotate = -45,regular polygon sides=4] (a) {};

  \fill (a.corner 1) circle[radius=0.7pt];
  \fill (a.corner 3) circle[radius=0.7pt];
  
  \path (a.corner 1) edge [bend left=20] (a.corner 3);
  \path (a.corner 1) edge [bend left=60] (a.corner 3);
  \path (a.corner 1) edge [bend right=20] (a.corner 3);
  \path (a.corner 1) edge [bend right=60] (a.corner 3);

\end{tikzpicture}
\, \, \, \longleftarrow 
\begin{tikzpicture}[baseline={([yshift=-0.6ex] current bounding box.center)}]
\node[draw=none,minimum size=2cm,regular polygon,regular polygon sides=4] (a) {};

\foreach \x in {1,2,...,4}
  \fill (a.corner \x) circle[radius=0.7pt];

\path (a.corner 1) edge [bend left=20] (a.corner 2);
\path (a.corner 1) edge [bend right=20] (a.corner 2);
\path (a.corner 2) edge [bend left=20] (a.corner 3);
\path (a.corner 2) edge [bend right=20] (a.corner 3);
\path (a.corner 3) edge [bend left=20] (a.corner 4);
\path (a.corner 3) edge [bend right=20] (a.corner 4);
\path (a.corner 4) edge [bend left=20] (a.corner 1);
\path (a.corner 4) edge [bend right=20] (a.corner 1);
\end{tikzpicture}
\longleftarrow
\begin{tikzpicture}[baseline={([yshift=-0.6ex] current bounding box.center)}]
\node[draw=none,minimum size=2cm,regular polygon,regular polygon sides=8] (a) {};

\foreach \x in {1,2,...,8}
  \fill (a.corner \x) circle[radius=0.7pt];
  
\foreach \y\z in {1/2,2/3,3/4,4/5,5/6,6/7,7/8,8/1}
  \path (a.corner \y) edge [bend left=20] (a.corner \z);
  
\foreach \y\z in {1/2,2/3,3/4,4/5,5/6,6/7,7/8,8/1}
  \path (a.corner \y) edge [bend right=20] (a.corner \z);
\end{tikzpicture}
\longleftarrow \ldots
\end{equation*}
The polynomial $Q$ is 
$$Q(T) = 2T,$$
so we should have $\mu_{2} = 1$ and $\lambda_{2} = 1$.  We calculate
$$\kappa_{0} = 1, \kappa_{1} = 2^{2}, \kappa_{2} = 2^{5}, \kappa_{3} = 2^{10}, \ldots, $$
but, it is simple to see combinatorially that
$$\kappa_{n} = 2^{2^{n} + n -1}.$$
We have
$${\rm ord}_{2}(\kappa_{n}) =  2^{n} + n -1,$$
for all $n \ge 0$.

\item If we take $a_{1} =3, a_{2} = 5$ and $\ell = 2$.  Then, we get:
\begin{equation*}
\begin{tikzpicture}[baseline={([yshift=-1.7ex] current bounding box.center)}]
\node[draw=none,minimum size=2cm,regular polygon,regular polygon sides=1] (a) {};
\foreach \x in {1}
  \fill (a.corner \x) circle[radius=0.7pt];
\draw (a.corner 1) to [in=50,out=130,loop] (a.corner 1);
\draw (a.corner 1) to [in=50,out=130,distance = 0.5cm,loop] (a.corner 1);
\end{tikzpicture}
\longleftarrow \, \, \,
\begin{tikzpicture}[baseline={([yshift=-0.6ex] current bounding box.center)}]
\node[draw=none,minimum size=2cm,regular polygon,rotate = -45,regular polygon sides=4] (a) {};

  \fill (a.corner 1) circle[radius=0.7pt];
  \fill (a.corner 3) circle[radius=0.7pt];
  
  \path (a.corner 1) edge [bend left=20] (a.corner 3);
  \path (a.corner 1) edge [bend left=60] (a.corner 3);
  \path (a.corner 1) edge [bend right=20] (a.corner 3);
  \path (a.corner 1) edge [bend right=60] (a.corner 3);

\end{tikzpicture}
\, \, \, \longleftarrow 
\begin{tikzpicture}[baseline={([yshift=-0.6ex] current bounding box.center)}]
\node[draw=none,minimum size=2cm,regular polygon,regular polygon sides=4] (a) {};

\foreach \x in {1,2,...,4}
  \fill (a.corner \x) circle[radius=0.7pt];

\path (a.corner 1) edge [bend left=20] (a.corner 2);
\path (a.corner 1) edge [bend right=20] (a.corner 2);
\path (a.corner 2) edge [bend left=20] (a.corner 3);
\path (a.corner 2) edge [bend right=20] (a.corner 3);
\path (a.corner 3) edge [bend left=20] (a.corner 4);
\path (a.corner 3) edge [bend right=20] (a.corner 4);
\path (a.corner 4) edge [bend left=20] (a.corner 1);
\path (a.corner 4) edge [bend right=20] (a.corner 1);
\end{tikzpicture}
\longleftarrow
\begin{tikzpicture}[baseline={([yshift=-0.6ex] current bounding box.center)}]
\node[draw=none,minimum size=2cm,regular polygon,regular polygon sides=8] (a) {};

\foreach \x in {1,2,...,8}
  \fill (a.corner \x) circle[radius=0.7pt];
  
\foreach \y\z in {1/4,2/5,3/6,4/7,5/8,6/1,7/2,8/3}
  \path (a.corner \y) edge [bend left=10] (a.corner \z);
  
\foreach \y\z in {1/6,2/7,3/8,4/1,5/2,6/3,7/4,8/5}
  \path (a.corner \y) edge [bend left=10] (a.corner \z);
\end{tikzpicture}
\longleftarrow 
\begin{tikzpicture}[baseline={([yshift=-0.6ex] current bounding box.center)}]
\node[draw=none,minimum size=2cm,regular polygon,regular polygon sides=16] (a) {};

\foreach \x in {1,2,...,16}
  \fill (a.corner \x) circle[radius=0.7pt];
  
\foreach \y\z in {1/4,2/5,3/6,4/7,5/8,6/9,7/10,8/11,9/12,10/13,11/14,12/15,13/16,14/1,15/2,16/3}
  \path (a.corner \y) edge (a.corner \z);
  
\foreach \y\z in {1/6,2/7,3/8,4/9,5/10,6/11,7/12,8/13,9/14,10/15,11/16,12/1,13/2,14/3,15/4, 16/5}
  \path (a.corner \y) edge  (a.corner \z);
\end{tikzpicture}
\longleftarrow \ldots
\end{equation*}
The polynomial $Q$ is 
$$Q(T) = 34T - 56T^{2} + 36T^{3}-10T^{4} + T^{5},$$
so we should have $\mu_{2} = 0$ and $\lambda_{2} = 9$.  We calculate
$$\kappa_{0} = 1, \kappa_{1} = 2^{2}, \kappa_{2} = 2^{5}, \kappa_{3} = 2^{10}, \kappa_{4} = 2^{25}, \kappa_{5} = 2^{34} \cdot 577^{2}, \ldots $$
We have
$${\rm ord}_{2}(\kappa_{n}) =  9n -11,$$
for all $n \ge 4$.

\item  If we take $a_{1}=1, a_{2}=4, a_{3} =20$ and $\ell = 3$.  Then, we get:
\begin{equation*}
\begin{tikzpicture}[baseline={([yshift=-1.7ex] current bounding box.center)}]
\node[draw=none,minimum size=3cm,regular polygon,regular polygon sides=1] (a) {};
\foreach \x in {1}
  \fill (a.corner \x) circle[radius=0.7pt];
\draw (a.corner 1) to [in=50,out=130,loop] (a.corner 1);
\draw (a.corner 1) to [in=50,out=130,distance = 0.8cm,loop] (a.corner 1);
\draw (a.corner 1) to [in=50,out=130,distance = 0.5cm,loop] (a.corner 1);
\end{tikzpicture}
\longleftarrow \, \, \,
\begin{tikzpicture}[baseline={([yshift=-0.6ex] current bounding box.center)}]
\node[draw=none,minimum size=2cm,regular polygon,regular polygon sides=3] (a) {};

\foreach \x in {1,2,3}
  \fill (a.corner \x) circle[radius=0.7pt];

\path (a.corner 1) edge [bend left=20] (a.corner 2);
\path (a.corner 1) edge [bend right=20] (a.corner 2);
\path (a.corner 2) edge [bend left=20] (a.corner 3);
\path (a.corner 2) edge [bend right=20] (a.corner 3);
\path (a.corner 3) edge [bend left=20] (a.corner 1);
\path (a.corner 3) edge [bend right=20] (a.corner 1);

\path (a.corner 1) edge  (a.corner 2);
\path (a.corner 2) edge  (a.corner 3);
\path (a.corner 3) edge  (a.corner 1);

\end{tikzpicture}
\longleftarrow \, \,
\begin{tikzpicture}[baseline={([yshift=-0.6ex] current bounding box.center)}]
\node[draw=none,minimum size=2cm,regular polygon,regular polygon sides=9] (a) {};

\foreach \x in {1,2,...,9}
  \fill (a.corner \x) circle[radius=0.7pt];
  
\foreach \y\z in {1/2,2/3,3/4,4/5,5/6,6/7,7/8,8/9,9/1}
  \path (a.corner \y) edge (a.corner \z);
  
\foreach \y\z in {1/3,2/4,3/5,4/6,5/7,6/8,7/9,8/1,9/2}
  \path (a.corner \y) edge (a.corner \z); 
  
\foreach \y\z in {1/5,2/6,3/7,4/8,5/9,6/1,7/2,8/3,9/4}
  \path (a.corner \y) edge (a.corner \z);

\end{tikzpicture}
\longleftarrow
\begin{tikzpicture}[baseline={([yshift=-0.6ex] current bounding box.center)}]
\node[draw=none,minimum size=2cm,regular polygon,regular polygon sides=27] (a) {};

\foreach \x in {1,2,...,27}
  \fill (a.corner \x) circle[radius=0.7pt];
  
\foreach \y\z in {1/2,2/3,3/4,4/5,5/6,6/7,7/8,8/9,9/10,10/11,11/12,12/13,13/14,14/15,15/16,16/17,17/18,18/19,19/20,20/21,21/22,22/23,23/24,24/25,25/26,26/27,27/1}
  \path (a.corner \y) edge (a.corner \z);
  
\foreach \y\z in {1/5,2/6,3/7,4/8,5/9,6/10,7/11,8/12,9/13,10/14,11/15,12/16,13/17,14/18,15/19,16/20,17/21,18/22,19/23,20/24,21/25,22/26,23/27,24/1,25/2,26/3,27/4}
  \path (a.corner \y) edge (a.corner \z); 
  
\foreach \y\z in {1/21,2/22,3/23,4/24,5/25,6/26,7/27,8/1,9/2,10/3,11/4,12/5,13/6,14/7,15/8,16/9,17/10,18/11,19/12,20/13,21/14,22/15,23/16,24/17,25/18,26/19,27/20}
  \path (a.corner \y) edge (a.corner \z);

\end{tikzpicture}
\, \, \longleftarrow \ldots
\end{equation*}
The polynomial $Q$ starts as follows 
$$Q(T) = 417T -13320T^{2}+175568T^{3} - \ldots - T^{20},$$
so we should have $\mu_{3} = 0$ and $\lambda_{3} = 5$.  We calculate
$$\kappa_{0} = 1, \kappa_{1} = 3^{3} , \kappa_{2} = 2^{6} \cdot 3^{8}, \kappa_{3} = 2^{6} \cdot 3^{13} \cdot 176417^{2}, \ldots $$
and we have
$${\rm ord}_{3}(\kappa_{n}) =  5n - 2,$$
for all $n \ge 1$.
\end{enumerate}

\bibliographystyle{plain}
\bibliography{main}

\begin{thebibliography}{10}

\bibitem{Baker/Norine:2009}
Matthew Baker and Serguei Norine.
\newblock Harmonic morphisms and hyperelliptic graphs.
\newblock {\em Int. Math. Res. Not. IMRN}, (15):2914--2955, 2009.

\bibitem{Corry/Perkinson:2018}
Scott Corry and David Perkinson.
\newblock {\em Divisors and sandpiles}.
\newblock American Mathematical Society, Providence, RI, 2018.
\newblock An introduction to chip-firing.

\bibitem{Hammer:2020}
Kyle Hammer, Thomas~W. Mattman, Jonathan~W. Sands, and Daniel Valli\`{e}res.
\newblock The special value $u=1$ of {A}rtin-{I}hara {$L$}-functions.
\newblock {\em Submitted for publication}.

\bibitem{Iwasawa:1973}
Kenkichi Iwasawa.
\newblock On {${\mathbb{Z}}_{l}$}-extensions of algebraic number fields.
\newblock {\em Ann. of Math. (2)}, 98:246--326, 1973.

\bibitem{Neukirch:1999}
J\"urgen Neukirch.
\newblock {\em Algebraic number theory}, volume 322 of {\em Grundlehren der
  Mathematischen Wissenschaften [Fundamental Principles of Mathematical
  Sciences]}.
\newblock Springer-Verlag, Berlin, 1999.
\newblock Translated from the 1992 German original and with a note by Norbert
  Schappacher, With a foreword by G. Harder.

\bibitem{Northshield:1998}
Sam Northshield.
\newblock A note on the zeta function of a graph.
\newblock {\em J. Combin. Theory Ser. B}, 74(2):408--410, 1998.

\bibitem{Rosen:2002}
Michael Rosen.
\newblock {\em Number theory in function fields}, volume 210 of {\em Graduate
  Texts in Mathematics}.
\newblock Springer-Verlag, New York, 2002.

\bibitem{SAGE}
William Stein.
\newblock {\em {Sage}: {O}pen {S}ource {M}athematical {S}oftware ({V}ersion
  4.5.3)}.
\newblock The Sage~Group, 2008.
\newblock {A}vailable from http://www.sagemath.org.

\bibitem{Sunada:2013}
Toshikazu Sunada.
\newblock {\em Topological crystallography}, volume~6 of {\em Surveys and
  Tutorials in the Applied Mathematical Sciences}.
\newblock Springer, Tokyo, 2013.
\newblock With a view towards discrete geometric analysis.

\bibitem{Terras:2011}
Audrey Terras.
\newblock {\em Zeta functions of graphs}, volume 128 of {\em Cambridge Studies
  in Advanced Mathematics}.
\newblock Cambridge University Press, Cambridge, 2011.
\newblock A stroll through the garden.

\end{thebibliography}

\end{document}